\theoremstyle{plain} 
\newtheorem{theorem}{\indent\sc Theorem}[section]
\newtheorem{lemma}[theorem]{\indent\sc Lemma}
\newtheorem{corollary}[theorem]{\indent\sc Corollary}
\newtheorem{proposition}[theorem]{\indent\sc Proposition}
\theoremstyle{definition} 
\newtheorem{definition}[theorem]{\indent\sc Definition}
\newtheorem{remark}[theorem]{\indent\sc Remark}
\newtheorem{example}[theorem]{\indent\sc Example}
\def\address#1#2{\begingroup
\noindent\parbox[t]{7.8cm}{%
\small{\scshape\ignorespaces#1}\par\vskip1ex
\noindent\small{\itshape E-mail address}%
\/: #2\par\vskip4ex}\hfill%
\endgroup}%
\title{\uppercase{On the Clifford theorem for surfaces}} 
\author{
%
%
\textsc{Hao Sun} 
}
\date{} 
\DeclareMathOperator{\Pic}{Pic}
\begin{document}

\maketitle

\footnote{ 
2000 \textit{Mathematics Subject Classification}. Primary 14J10;
Secondary 14J29. }
\footnote{ 
\textit{Key words and phrases}. Clifford theorem, Clifford index,
algebraic surface, moduli.}


\begin{abstract}
We give two generalizations of the Clifford theorem to algebraic
surfaces. As an application, we obtain some bounds for the number of
moduli of surfaces of general type.
\end{abstract}

\section*{Introduction} 
The classical Brill-Noether theory is to study special divisors or
linear systems on an algebraic curve, and the Clifford theorem is
the first step of the theory (cf. \cite{A}). The main purpose of
this paper is to generalize the Clifford theorem to algebraic
surfaces.

Let $X$ be a smooth projective complex surface and $L$ a divisor on
it. One of the fundamental problems in the surface case is to study
the adjoint linear system $|K_X+L|$. Roughly speaking, the behavior
of this linear system depends on the positivity of $L$. When $L$ is
positive, we have a celebrated method of Reider \cite{Re} (see also
\cite{Bom} and \cite{Tan}). When $L$ is zero, the canonical system
has also been studied systematically by Beauville \cite{Be}. When
$L$ is negative, the linear system corresponds to the special
divisors on a curve. Exactly, we say a divisor $D$ on $X$ a special
divisor if it is effective and $h^0(K_X-D)>0$. However, for
surfaces, we have no general method to study such special divisors.
In order to find a powerful method to study special linear systems
in the surface case, we need to establish first a Clifford-type
theorem.

One easy generalization of the Clifford theorem is as follows. Let
$L$ be a special divisor on $X$. From $h^0(L)+h^0(K_X-L)\leq
h^0(K_X)+1$ and the Riemann-Roch theorem, we get
\begin{equation}\label{I}
h^1(L)\leq q+\frac{1}{2}L(K_X-L),
 \end{equation} where $q$ is the irregularity of $X$. If $L=0$ or $K_X$, the equality
holds. As in the curve case, the nontrivial problem is to
characterize the equality. Our first result describes such
conditions on the surface and on the divisor $L$. We can assume that
$L\nsim0$ and $K_X-L\nsim 0$.

\begin{theorem}\label{theorem0.2}If the equality in \eqref{I}
holds then either $L$ contains a divisor of the movable part of
$|K_X|$, or $L$ is contained in the fixed part of $|K_X|$, or one of
the following cases occurs.
\begin{enumerate}
\item $|K_X|$ is composed of a rational pencil, and the movable part of
$|L|$ is a sum of some fibers of the pencil.

\item $|K_X|$ is composed of a irrational pencil of elliptic curves. The corresponding elliptic fibration
is $f:X\rightarrow C$ with $g(C)\geq2$. There are two line bundles
$A$ and $B$ on $C$ such that $f^*A$ and $f^*B$ are respectively the
movable part of $|L|$ and $|K_X-L|$. The Clifford index of $C$ is
less than $2$. Exactly, we have the following possible cases$:$
\begin{enumerate}
\item $0\leq\chi(\mathcal{O}_X)\leq2$, $C$ is
hyperelliptic and one of $A$ and $B$ is a multiple of $g_2^1;$

\item $\chi(\mathcal{O}_X)=0$, $q=g(C)$, $C$ is a
smooth plane quintic and both of $A$ and $B$ are hyperplane
sections$;$

\item $\chi(\mathcal{O}_S)=0$, $q=g(C)$, $C$ is
trigonal and one of $A$ and $B$ is $g_3^1$.
\end{enumerate}
\end{enumerate}
\end{theorem}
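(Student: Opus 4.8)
The plan is to read the inequality \eqref{I} as an instance of the Hopf lemma applied to the multiplication (cup-product) map
\[
\mu\colon H^0(X,L)\otimes H^0(X,K_X-L)\longrightarrow H^0(X,K_X),\qquad s\otimes t\mapsto s\cdot t.
\]
Since $X$ is integral, the product of two nonzero sections is nonzero, so $\mu$ has no zero divisors, and the Hopf lemma gives $\dim\langle\operatorname{im}\mu\rangle\ge h^0(L)+h^0(K_X-L)-1$; together with $\langle\operatorname{im}\mu\rangle\subseteq H^0(K_X)$ this reproves \eqref{I} in the equivalent form $h^0(L)+h^0(K_X-L)\le h^0(K_X)+1$. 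The whole argument rests on the observation that equality in \eqref{I} forces \emph{two} things at once: that $\mu$ is surjective onto $H^0(K_X)$, and that the Hopf bound is itself an equality, $\dim\langle\operatorname{im}\mu\rangle=h^0(L)+h^0(K_X-L)-1$. The proof is the analysis of these two conditions.

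First I would dispose of the degenerate cases, which match the endpoints $D=0,\,K$ of the classical curve statement. If $h^0(K_X-L)=1$, surjectivity of $\mu$ says that multiplication by the unique section of $K_X-L$ identifies $H^0(L)$ with $H^0(K_X)$, so $|K_X|=|L|+D_0$ with $D_0$ the fixed divisor of $|K_X-L|$; the movable part of $|K_X|$ then equals that of $|L|$, and a general member of $|L|$ contains a divisor of the movable part of $|K_X|$. Symmetrically, if $h^0(L)=1$ then every canonical section vanishes on the unique divisor of $|L|$, so $L$ lies in the fixed part of $|K_X|$. These are the first two alternatives, and I may henceforth assume $h^0(L)\ge 2$ and $h^0(K_X-L)\ge 2$.

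The heart of the matter is the equality case of the Hopf lemma for $\mu$. The structure theorem for bilinear maps attaining the bound $\dim\langle VW\rangle=\dim V+\dim W-1$ forces, after removing base components, the systems $|L|$ and $|K_X-L|$ to be composed with one and the same pencil, i.e.\ their movable parts pull back from a curve. I would make this geometric by restricting to a general member and invoking the classical Clifford theorem on curves: were the rational map attached to $|K_X|$ to have two-dimensional image, the restricted multiplication would exceed the Hopf bound, a contradiction. Hence $|K_X|$ is composed with a pencil, presented after a birational modification by a morphism $f\colon X\to C$ with $C$ smooth, and the movable parts of $|L|$ and $|K_X-L|$ are $f^*A$ and $f^*B$. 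If $C=\mathbb P^1$ the pencil is rational, and since every class on $\mathbb P^1$ is Clifford-trivial no further condition survives: this is case~1, with the movable part of $|L|$ a sum of fibers.

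The remaining and hardest case is the irrational pencil, $g(C)\ge 2$. Here I would invoke the canonical bundle formula for $f$ to write the movable part of $|K_X|$ as $f^*\mathcal L$ with $\deg\mathcal L$ governed by $\chi(\mathcal O_X)$, and to see that the general fiber $F$ is elliptic: the fixed locus is vertical, so $K_X\cdot F=0$ and $2g(F)-2=0$ by adjunction. Pushing the equality in \eqref{I} down to $C$ converts it into a Clifford equality for $A,B$ with $A+B$ numerically $K_C$ up to the degree-$\chi(\mathcal O_X)$ correction; the classical Clifford theorem and the Green--Lazarsfeld classification of curves of small Clifford index then force $\operatorname{Cliff}(C)<2$ and exactly the three subcases (hyperelliptic with a multiple of $g_2^1$, trigonal with a $g_3^1$, or a smooth plane quintic with hyperplane sections), the side conditions $0\le\chi(\mathcal O_X)\le 2$ and $q=g(C)$ emerging from matching degrees and from $h^0$-computations on $C$. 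The main obstacle I anticipate is precisely this passage from the abstract Hopf equality to the concrete conclusion that $|K_X|$ is composed with a pencil, and the verification that the pencil is elliptic in the irrational case; once one is on $C$, the subcase bookkeeping is a routine application of curve theory.
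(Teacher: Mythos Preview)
Your framework via the multiplication map $\mu$ and the Hopf bound is a legitimate alternative to the paper's route, and the reduction to the two simultaneous equalities (surjectivity of $\mu$ and saturation of the Hopf bound) is correct. The paper, however, does not invoke any structure theorem for the Hopf equality. Instead it proves an elementary restriction lemma: for an irreducible reduced $Y\subset X$, the restricted systems satisfy
\[
\bigl(h^0(L)-h^0(\mathcal I_Y(L))\bigr)+\bigl(h^0(D)-h^0(\mathcal I_Y(D))\bigr)\le h^0(L+D)-h^0(\mathcal I_Y(L+D))+1,
\]
and applies it with $Y$ a general irreducible member of the movable part $|M|$ of $|L|$, $D=K_X-L$. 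Under $h^0(L)+h^0(K_X-L)=p_g+1$ this forces $h^0(M)\le 2$, contradicting $\dim\varphi_L(X)=2$; hence $|L|$ (and symmetrically $|K_X-L|$) is composed with a pencil, and then so is $|K_X|$. This is self-contained and avoids the ``structure theorem'' you invoke but do not supply. Your fallback---``restrict to a general member and use Clifford on the curve''---is not an argument as written: you do not say which curve, what is restricted, or how the restricted multiplication contradicts anything. (A cleaner salvage of your approach: surjectivity of $|L|\times|K_X-L|\to|K_X|$ makes the general canonical divisor a sum of two nonzero moving pieces, hence reducible after stripping the fixed part; Bertini then forces the canonical image to be a curve. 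But you have not written this, and the bookkeeping with fixed parts needs care.)

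There is a genuine error in your irrational-pencil step. You claim ``the fixed locus is vertical, so $K_X\cdot F=0$, so $g(F)=1$.'' Verticality of the fixed part is precisely what is at stake, not a hypothesis; for a surface of general type fibred over a curve one typically has $K_X\cdot F>0$. The paper argues instead by Kodaira dimension: since $p_g\ge 3$, $X$ is either of general type or properly elliptic. In the general-type case Xiao's theorem on irregular canonical pencils forces $g(C)=0$, so the irrational case simply does not occur there. Only when $X$ is an elliptic surface does one get elliptic fibres (by classification, with the movable part of $|K_X|$ automatically base-point free), and the analysis on $C$ then proceeds. You have also skipped $g(C)=1$, which the paper excludes by the direct count $h^0(A)+h^0(B)=\deg(A+B)=h^0(A+B)=p_g$, contradicting $p_g+1$. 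Finally, the bound $\deg(A+B)\le\deg f_*\omega_X=p_g+g(C)-1$ and the curve-Clifford analysis that yield the three subcases and the constraints on $\chi(\mathcal O_X)$ and $q$ require more than ``routine bookkeeping''; you should expect to use the canonical bundle formula for elliptic fibrations explicitly.
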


This theorem can be considered as a generalization of the Clifford
theorem. We have another type of generalization as follows.

\begin{theorem}\label{theorem0.1}
Let $X$ be a smooth minimal complex projective surface of general
type. Let $L$ be a special divisor on $X$ such that $L\nsim K_X$,
then $h^0(L)\leq K_XL/2+1$. If the equality holds, then one of the
following cases occurs.
\begin{enumerate}
\item $h^0(L)=1$ and $L$ is a sum of $(-2)$-curves.
\item The movable part of $|L|$ has no base points and $\varphi_L:X\rightarrow \mathbf{P}^1$ is a projective
surjective morphism, whose general fiber is an irreducible smooth
curve of genus $2$.
\item The movable part of $|L|$ has no base points and $\varphi_L$ is
generically $2$ to $1$ onto a surface of minimal degree in
$\mathbf{P}^{h^0(L)-1}$.
\end{enumerate}
\end{theorem}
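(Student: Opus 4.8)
The plan is to reduce the whole statement to the classical Clifford theorem applied to a general \emph{canonical} curve. First I would dispose of the case $p_g=0$: if $L\geq 0$ and $h^0(K_X-L)>0$, then multiplying a nonzero section of $K_X-L$ by the canonical section of $L$ embeds $H^0(K_X-L)\hookrightarrow H^0(K_X)$, so $p_g\geq h^0(K_X-L)>0$. Hence $|K_X|\neq\emptyset$, and the idea is to choose a general member $C\in|K_X|$ and compare $|L|$ with $|L|_C|$.

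The key tool is the restriction sequence $0\to\mathcal{O}_X(L-C)\to\mathcal{O}_X(L)\to\mathcal{O}_C(L)\to 0$. Since $C\sim K_X$ and $K_X-L$ is effective and nonzero (using $L\nsim K_X$), the bundle $L-C\sim-(K_X-L)$ has no sections, so $H^0(X,L)\to H^0(C,L|_C)$ is injective and $h^0(L)\leq h^0(C,L|_C)$. By adjunction $K_C=(K_X+C)|_C=2K_X|_C$, hence $K_C-L|_C=\bigl(K_X+(K_X-L)\bigr)|_C$ is the restriction of an effective divisor; thus $L|_C$ is special on $C$ (and effective for general $C$). As $\deg(L|_C)=K_XL$, the classical Clifford theorem gives $h^0(C,L|_C)\leq\tfrac12 K_XL+1$, and combining the two inequalities yields the asserted bound.

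For the equality discussion, equality must force simultaneously that $H^0(X,L)\xrightarrow{\sim}H^0(C,L|_C)$ and that $L|_C$ attains Clifford equality on $C$. Because $K_X$ is nef, $K_X(K_X-L)\geq 0$ gives $K_XL\leq K_X^2=\tfrac12\deg K_C$, so $\deg(L|_C)<\deg K_C$ and the case $L|_C\sim K_C$ is excluded. Clifford equality then leaves two possibilities: either $L|_C\sim 0$, forcing $K_XL=0$, whence $L$ is a sum of $(-2)$-curves and $h^0(L)=1$ (this is case $1$); or $C$ is hyperelliptic and $L|_C$ is a multiple of $g_2^1$, so that $\varphi_L|_C$ is a $2:1$ map onto a rational normal curve. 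I would then propagate this two-to-one behaviour from the general canonical curve to all of $X$: if $\dim\varphi_L(X)=1$ the movable part of $|L|$ is a pencil whose general fibre $F$ carries the $g_2^1$, and the adjunction computation $2g(F)-2=K_XF$ identifies the fibre genus as $2$ (case $2$); if $\dim\varphi_L(X)=2$ then $\varphi_L$ is generically $2:1$ onto a surface, and the fact that every canonical curve maps onto a rational normal curve forces the image to have the minimal possible degree $h^0(L)-2$ (case $3$).

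The main obstacle is the geometry of $|K_X|$ itself: a general canonical curve need not be smooth, or even irreducible, and $|K_X|$ may carry base points or be composed of a pencil. To legitimately invoke the classical Clifford theorem I would pass to the normalization of $C$ while tracking that both $\deg(L|_C)$ and the specialness of $L|_C$ are preserved, and in the pencil cases I would lean on Beauville's analysis of canonical systems together with the structure already isolated in Theorem \ref{theorem0.2}. The second delicate point is globalizing the hyperelliptic involution: proving that a $2:1$ structure present on the general member of $|K_X|$ descends to a genuine degree-$2$ rational map (or a genus-$2$ fibration) of $X$, rather than varying with the curve, is where the real work lies and where spurious configurations must be ruled out.
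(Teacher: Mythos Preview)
Your route is genuinely different from the paper's: the paper never restricts to a canonical curve, but instead bounds $h^0(L)$ directly on $X$ by comparing $M^2$, $MK_X$ and $K_X^2$ via the degree of $\varphi_L(X)$ and Hodge's index theorem (their Propositions \ref{proposition1.2} and \ref{proposition1.5}), and then squeezes out the equality cases by parity and Hodge-index arguments. Your idea of feeding $L|_C$ into the curve Clifford theorem is natural and, when it works, gives the inequality in one line; but there are two genuine gaps.

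\textbf{The inequality step is not justified in general.} You correctly flag that a general $C\in|K_X|$ need not be smooth or even irreducible, but your proposed remedy---pass to the normalization and ``track that specialness is preserved''---fails. If $\nu:\widetilde C\to C$ is the normalization of an integral $C$, then $K_{\widetilde C}=\nu^*\omega_C(-\mathfrak c)$ with $\mathfrak c$ the conductor, so from $h^0(C,\omega_C-L|_C)>0$ you cannot conclude $h^0(\widetilde C,K_{\widetilde C}-\nu^*L)>0$; the very divisor that witnesses specialness on $C$ may be swallowed by $\mathfrak c$. Worse, on a minimal surface of general type $|K_X|$ can perfectly well be composed of a pencil (this is exactly the situation appearing in Theorem \ref{theorem0.2}), in which case the general $C$ is reducible and there is no single curve on which to run Clifford. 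The paper's approach sidesteps all of this by never leaving the surface.

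\textbf{The equality analysis is essentially missing.} Even granting the inequality, your passage from ``$L|_C$ is a multiple of the $g^1_2$ on each canonical curve'' to a global genus-$2$ fibration or a global degree-$2$ map is the heart of the matter, and you only describe it as ``where the real work lies''. Concretely: in the pencil case you write $2g(F)-2=K_XF$, but adjunction gives $2g(F)-2=K_XF+F^2$, so you first need $F^2=0$ (i.e.\ base-point-freeness of the movable part) and $K_XF=2$, neither of which you have derived; in the surface case you need to pin down $M^2=2h^0(L)-4$ and rule out a birational $\varphi_L$. The paper obtains all of this by explicit numerics: from $h^0(L)=K_XL/2+1$ it forces $M^2=K_XM-2$, shows $M^2=K_XM$ would imply $M\equiv K_X$ (contradicting $L\nsim K_X$), and reads off the two geometric alternatives directly. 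Your outline does not supply a substitute for this computation.
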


The two theorems will be proved in Sections \ref{sec:1} and
\ref{sec:2}, respectively.

The organization of the paper is as follows. In Section \ref{sec:1},
we prove Theorem \ref{theorem0.2}. In Section \ref{sec:2}, we will
give some Clifford type inequalities on a surface (Propositions
\ref{proposition1.2} and \ref{proposition1.5}) and prove Theorem
\ref{theorem0.1}.  In Section \ref{sec:3}, we use these two
inequalities to define two indices $\alpha(X)$ and $\beta(X)$ on $X$
like the Clifford index in the case of a curve. We study some basic
properties of $\alpha(X)$ and $\beta(X)$ and give some bounds for
them (Propositions \ref{proposition2.4} and \ref{proposition2.6}).
In Section \ref{sec:4}, we give a detailed description of $X$, when
$\alpha$ and $\beta$ are zero (Theorems \ref{theorem3.1} and
\ref{theorem3.2}). In Section \ref{sec:5}, we use our inequalities
to give some bounds for the number of moduli of surfaces (Theorem
\ref{theorem4.2}).

Throughout the paper, we let $X$ be a smooth complex projective
surface and $K_X$ be its canonical divisor. $p_g$ and $q$ denote,
respectively, $h^0(K_X)$ and $h^1(\mathcal{O}_X)$. For a divisor $L$
on $X$, we let $\varphi_L$ be the rational map defined by the linear
system $|L|$. $|L|$ is said to be composed of a pencil if
$\dim\varphi_L(X)=1$. Numerical equivalence between divisors is
denoted by $\equiv$ and linear equivalence by $\sim$. $g_d^r$
denotes a linear system of degree $d$ and dimension $r$ on a smooth
projective curve. If $E$ is a vector space we will denote by
$\mathbb{P}E$ the space of one-dimension subspaces of $E$.

The author would like to express his appreciation to professor
Sheng-Li Tan for his advice, encouragement and the helpful
discussions. The author is also grateful to the referee for
providing him some valuable suggestions and pointing out grammatical
mistakes.

\section{Proof of Theorem \ref{theorem0.2}}\label{sec:1}
In this section, we will prove Theorem \ref{theorem0.2}. In the
first place, we need the following key lemma.

\begin{lemma}\label{lemma2.1}
Suppose $Z$ is a projective variety. Let $L$ and $D$ be cartier
divisors on $Z$. Assume $Y$ is an irreducible and reduced closed
subscheme of $Z$ and denote $\mathcal{I}_Y$ the ideal sheaf of $Y$
in $Z$. If $h^0(L)-h^0(\mathcal{I}_Y(L))>0$ and
$h^0(D)-h^0(\mathcal{I}_Y(D))>0$, then we have
$$h^0(L)-h^0(\mathcal{I}_Y(L))+h^0(D)-h^0(\mathcal{I}_Y(D))\leq h^0(L+D)-h^0(\mathcal{I}_Y(L+D))+1.$$
\end{lemma}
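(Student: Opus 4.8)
The plan is to reinterpret each difference $h^0(\cdot)-h^0(\mathcal{I}_Y(\cdot))$ as the rank of a restriction map, and then to exploit the multiplicativity of sections together with the integrality of $Y$. For a Cartier divisor $M$ on $Z$, the ideal sheaf sequence
$$0\to\mathcal{I}_Y(M)\to\mathcal{O}_Z(M)\to\mathcal{O}_Y(M|_Y)\to 0$$
shows that $h^0(M)-h^0(\mathcal{I}_Y(M))$ equals the dimension of the image $V_M$ of the restriction map $\rho_M\colon H^0(Z,\mathcal{O}_Z(M))\to H^0(Y,\mathcal{O}_Y(M|_Y))$. Thus the hypotheses say $V_L\neq 0$ and $V_D\neq 0$, and the asserted inequality is exactly $\dim V_L+\dim V_D\le \dim V_{L+D}+1$.

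Next I would use that multiplication of sections is compatible with restriction: for $s\in H^0(Z,\mathcal{O}_Z(L))$ and $t\in H^0(Z,\mathcal{O}_Z(D))$ one has $\rho_{L+D}(st)=\rho_L(s)\,\rho_D(t)$. Hence the bilinear multiplication map $H^0(Y,L|_Y)\otimes H^0(Y,D|_Y)\to H^0(Y,(L+D)|_Y)$ carries $V_L\otimes V_D$ into $V_{L+D}$, and it suffices to bound from below the dimension of the linear span $\langle V_L\cdot V_D\rangle\subseteq V_{L+D}$.

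The essential point is that this multiplication has no zero divisors: since $Y$ is irreducible and reduced, it is integral, so a nonzero section of a line bundle on $Y$ vanishes only on a proper closed subset; consequently the product of a nonzero element of $V_L$ and a nonzero element of $V_D$ is a nonzero section of $(L+D)|_Y$. With this nondegeneracy in hand, I would invoke Hopf's lemma (the same input that drives the classical Clifford theorem): if $\mu\colon U\times W\to T$ is a bilinear map of finite-dimensional vector spaces over an infinite field such that $\mu(u,w)=0$ forces $u=0$ or $w=0$, then $\dim\langle\mu(U,W)\rangle\ge \dim U+\dim W-1$. Applying this with $U=V_L$, $W=V_D$ gives $\dim V_{L+D}\ge \dim\langle V_L\cdot V_D\rangle\ge \dim V_L+\dim V_D-1$, which rearranges to the claim.

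The main obstacle is Hopf's lemma itself; everything else is bookkeeping. If one wants a self-contained argument rather than a citation, the standard route is to work over the algebraically closed base field $\mathbb{C}$ and run the base-point-free pencil trick together with induction on $\dim V_D$: choosing a well-placed two-dimensional subspace inside $V_D$ and analyzing the kernel of multiplication by it reduces the general case to the two-dimensional one, where the absence of zero divisors yields injectivity of the relevant map directly. Care is needed only to ensure that the chosen subspaces stay inside the image spaces $V_L$ and $V_D$ rather than in the ambient section spaces on $Y$, but since $V_L$ and $V_D$ are precisely the spaces being multiplied, this is automatic.
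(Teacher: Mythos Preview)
Your argument is correct and structurally parallel to the paper's: both restrict to $Y$, identify each difference $h^0(M)-h^0(\mathcal{I}_Y(M))$ with the dimension of the image $V_M\subseteq H^0(Y,M|_Y)$ of the restriction map, and then bound $\dim V_{L+D}$ from below using multiplication of sections and the integrality of $Y$.

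The only real difference is in how the final dimension bound is extracted. You invoke Hopf's lemma for the bilinear map $V_L\times V_D\to V_{L+D}$, using that integrality of $Y$ forbids zero divisors. The paper instead projectivizes to the linear systems $r_Y|L|,\,r_Y|D|,\,r_Y|L+D|$ and observes that the addition map $(L_1,D_1)\mapsto L_1+D_1$ has \emph{finite fibers}: an effective divisor on the integral variety $Y$ has only finitely many irreducible components, hence admits only finitely many decompositions into a sum of two effective divisors. Finiteness then forces the image to have the same dimension as the domain, which is exactly the desired inequality. In effect the paper gives a short, self-contained geometric proof of the particular instance of Hopf's inequality you cite; your version is cleaner to state but relies on an outside lemma, while the paper's is elementary and avoids any appeal beyond basic divisor combinatorics.
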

\begin{proof}
For any Cartier divisor $A$ on $Z$, we have the standard exact
sequence
$$0\rightarrow \mathcal{I}_Y(A)\rightarrow \mathcal{O}_Z(A)\xrightarrow{r_Y}\mathcal{O}_Y(A)\rightarrow 0,$$
where $r_Y$ is the restriction map. We consider the linear system
$r_Y|A|$ on $Y$:
$$r_Y|A|=\mathbb{P}r_Y(H^0(A))\subset \mathbb{P}H^0(\mathcal{O}_Y(A)).$$ We then define a map
\begin{eqnarray*}
\mu:r_Y|L|\times r_Y|D|&\rightarrow& r_Y|L+D|,\\
         (L_1,D_1)     &\mapsto    & L_1+D_1.
\end{eqnarray*}
It is easy to check that $\mu$ is well defined. But every element of
$r_Y|L+D|$ has finite components, thus $\mu$ is finite. Hence
$$\dim(\text{Im}(\mu))=\dim(r_Y|L|\times r_Y|D|)=h^0(L)-h^0(\mathcal{I}_Y(L))-1+h^0(D)-h^0(\mathcal{I}_Y(D))-1.$$
We know that $$h^0(L+D)-h^0(\mathcal{I}_Y(L+D))-1=\dim
r_Y|L+D|\geq\dim(\text{Im}(\mu)).$$ We get our desired inequality.
\end{proof}

\begin{remark}
If we take $Y$ to be an irreducible and reduced divisor, then the
inequality is $h^0(L)-h^0(L-Y)+h^0(D)-h^0(D-Y)\leq
h^0(L+D)-h^0(L+D-Y)+1$. Furthermore, if $Y$ is ample enough, such
that $h^0(L-Y)=h^0(D-Y)=h^0(L+D-Y)=0$, then we get
$h^0(L)+h^0(D)\leq h^0(L+D)+1$ which is well known.
\end{remark}

\begin{proof}[Proof of Theorem $\ref{theorem0.2}$]
If $h^0(K_X-L)=1$ or $h^0(L)=1$, we have $h^0(L)=p_g$ or
$h^0(K_X-L)=p_g$, respectively. Our conclusions are obvious. Hence
we assume $h^0(L)\geq 2$ and $h^0(K_X-L)\geq 2$. In particular, $X$
is either an elliptic surface or a surface of general type.

Let $|L|=|M|+V$ be the decomposition into its movable and fixed
parts. We claim that $h^0(K_X-L)>h^0(K_X-L-M)$. This is because if
$h^0(K_X-L)=h^0(K_X-L-M)$, then
$$h^0(K_X-L)+h^0(M)=h^0(K_X-L-M)+h^0(M)\leq h^0(K_X-L)+1.$$ This implies
$h^0(M)\leq 1$. It is absurd. Hence we proved the claim.

If $\dim\varphi_L(X)=2$, then $h^0(L)\geq 3$ and the general member
of $|M|$ is reduced and irreducible. Since $h^0(K_X-L)>h^0(K_X-L-M)$
and $h^0(L)-h^0(L-M)=h^0(L)-1\geq2$, the conditions of Lemma
\ref{lemma2.1} are satisfied. Hence by Lemma \ref{lemma2.1}, we have
$$h^0(L)-h^0(L-M)+h^0(K_X-L)-h^0(K_X-L-M)\leq p_g+1-h^0(K_X-M).$$
Since $h^0(L)+h^0(K_X-L)=p_g+1$, we get $h^0(K_X-M)\leq
h^0(K_X-L-M)+1$. This implies $h^0(K_X-L-M)\geq h^0(K_X-M)-1\geq1$.
Thus we conclude that
$$h^0(K_X-M)-1+h^0(M)\leq h^0(K_X-L-M)+h^0(M)\leq h^0(K_X-L)+1,$$
i.e., $h^0(K_X-M)+h^0(M)\leq h^0(K_X-L)+2$. Since $h^0(K_X-M)\geq
h^0(K_X-L)$, we obtain $h^0(M)\leq2$. It contradicts that
$h^0(M)=h^0(L)\geq 3$. Therefore $|L|$ is composed of a pencil.
Similarly, $|K_X-L|$ is also composed of a pencil.

Since $h^0(L)+h^0(K_X-L)=p_g+1$, i.e.,
$\dim|K_X|=\dim|L|+\dim|K_X-L|$, we can write every divisor in
$|K_X|$ as a divisor in $|L|$ plus a divisor in $|K_X-L|$. Hence
$|K_X|$ is composed of a pencil. Let $\pi:\widetilde{X}\rightarrow
X$ be a composite of blowing-ups such that the movable part of
$|\pi^* K_X|$ is base point free. We can assume that $\pi$ is the
shortest among those with such a property. Let
$\widetilde{X}\xrightarrow{f}
C\xrightarrow{\varepsilon}\mathbf{P}^{p_g-1}$ be the Stein
factorization of $\varphi_{\pi^* K_X}$. Then there are two base
point free divisors $A$ and $B$ on $C$ such that $f^*A$, $f^*B$ and
$f^*(A+B)$ are respectively the movable part of $\pi^*L$,
$\pi^*(K_X-L)$ and $\pi^*K_X$. Thus $h^0(L)=h^0(\pi^*
L)=h^0(f^*A)=h^0(A)$, $h^0(K_X-L)=h^0(B)$ and $h^0(A+B)=p_g$. If
$g(C)=1$, we have $h^0(A)+h^0(B)=\deg (A+B)=h^0(A+B)$. This implies
$h^0(L)+h^0(K_X-L)=p_g$ which contradicts our assumptions. Hence
$g(C)\neq1$. When $X$ is of general type, we know that $g(C)=0$,
$q\leq2$ by Xiao's estimate in \cite{X} and, therefore, $|K_X|$ is
composed of a rational pencil.

When $X$ is not of general type, it must be an elliptic surface. It
follows that the movable part of $|K_X|$ is base point free,
$\widetilde{X}=X$ and the general fiber of $f$ is an elliptic curve.
If $h^1(A)=0$, then $h^0(A)=\deg A-g(C)+1$. Thus we obtain
$$h^0(B)=h^0(A+B)+1-h^0(A)=\deg B+1+h^1(A+B)\geq \deg B+1.$$ Hence
$g(C)=0$. Similarly, if $h^1(B)=0$, we also have $g(C)=0$. Next we
assume that both of $A$ and $B$ are special divisors and
$g(C)\geq2$. By the Clifford theorem, we have
$$p_g+1=h^0(A)+h^0(B)\leq\frac{\deg(A+B)}{2}+2\leq\frac{\deg f_*\omega_X}{2}+2
=\frac{p_g+g(C)-1}{2}+2.$$ Hence we obtain $p_g\leq g(C)+1\leq q+1$,
i.e., $\chi(\mathcal{O}_X)\leq2$. If $h^0(A)=\deg A/2+1$ or
$h^0(B)=\deg B/2+1$, we get the case $(a)$ immediately. If
$h^0(A)\leq(\deg A+1)/2$ and $h^0(B)\leq(\deg B+1)/2$, then we have
$$p_g+1=h^0(A)+h^0(B)\leq\frac{\deg(A+B)}{2}+1\leq\frac{p_g+g(C)-1}{2}+1.$$
This implies $p_g\leq g(C)-1\leq q-1$, i.e.,
$\chi(\mathcal{O}_X)\leq0$. Therefore we know that
$\chi(\mathcal{O}_X)=0$, $g(C)=q=p_g+1$, $h^0(A)=(\deg A+1)/2$ and
$h^0(B)=(\deg B+1)/2$. By the classical knowledge of algebraic
curves, we get the cases $(b)$ and $(c)$.
\end{proof}

\section{Proof of Theorem \ref{theorem0.1}}\label{sec:2}
In this section, firstly we will give some Clifford type
inequalities. Let $L$ be a divisor on a smooth minimal complex
projective surface $X$ of general type. Let $|L|=|M|+V$ be the
decomposition into its movable and fixed parts, and $W$ the image of
$\varphi_L$.

\begin{proposition}\label{proposition1.2}
If $LK_X\geq 0$, we have
$$h^0(L)\leq \max\Big\{\frac{K_XL}{2}+1, \frac{(K_XL)^2}{2K_X^2}+2\Big\}.$$
\end{proposition}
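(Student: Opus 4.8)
The plan is to reduce everything to the movable part and then split according to the dimension of the image $W=\varphi_L(X)$. Since passing from $L$ to its movable part $M$ does not change global sections, I would first record $h^0(L)=h^0(M)$, and note that because $X$ is minimal of general type, $K_X$ is nef and big, so $K_X^2>0$ and, $V$ being effective, $0\le K_XM\le K_XL$. The cases $\dim W=0$ (where $h^0(L)=1$ and both bounds are trivially $\ge 1$) and the two genuine cases $\dim W=1,2$ are then treated separately; the first bound comes from the pencil case and the second from the case of a two-dimensional image, which is why the maximum is unavoidable.

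For $\dim W=1$ the system $|L|$ is composed of a pencil. After a composite of blow-ups making the movable part base point free, the Stein factorization produces a fibration $f$ whose general fibre $F$ has genus $\ge 2$: indeed a surface of general type admits no rational pencil (it is not uniruled) and no pencil of elliptic curves (it is not an elliptic surface), so $g(F)\ge 2$. Writing $M\equiv dF$ numerically, adjunction gives $K_XF\ge 2$, while the pencil structure forces $h^0(L)-1\le d$. Hence $h^0(L)\le d+1\le \tfrac{1}{2}dK_XF+1\le \tfrac{1}{2}K_XM+1\le \tfrac{1}{2}K_XL+1$, which is the first bound.

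For $\dim W=2$ a general member $C\in|M|$ is reduced and irreducible, and I would restrict via
$$0\to\mathcal{O}_X\to\mathcal{O}_X(M)\to\mathcal{O}_C(M)\to 0$$
to get $h^0(L)=h^0(M)\le 1+h^0(C,M|_C)$, where $\deg_C(M|_C)=M^2$. If $M|_C$ is special, Clifford's theorem on $C$ yields $h^0(C,M|_C)\le M^2/2+1$, and combining with the Hodge index inequality $M^2\le (K_XM)^2/K_X^2\le (K_XL)^2/K_X^2$ gives exactly $h^0(L)\le (K_XL)^2/(2K_X^2)+2$. If $M|_C$ is non-special, then $h^0(C,M|_C)=M^2+1-g(C)$, and substituting the adjunction value $2g(C)-2=K_XM+M^2$ turns the bound into $h^0(L)\le 1+\tfrac{1}{2}(M^2-K_XM)$; a one-line estimate using $K_XM\ge 0$ and again the Hodge index inequality shows this too is dominated by $(K_XL)^2/(2K_X^2)+2$. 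This completes the second bound.

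The main obstacle I anticipate is the fine analysis of the general member in the two-dimensional case when $|M|$ carries base points: ensuring by Bertini that $C$ is integral and that Clifford's inequality is legitimately applicable to it (passing to a resolution where the movable part is base point free, or invoking a version of Clifford for integral curves), and checking that the intersection-theoretic identities $\deg_C(M|_C)=M^2$ and the adjunction formula survive this reduction. The Hodge index step is routine once $K_X^2>0$ is in hand, and the only real bookkeeping is confirming that the non-special subcase never exceeds the second bound.
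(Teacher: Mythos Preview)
Your pencil case has a genuine gap. You correctly argue that, after resolving base points, the general fibre $\widetilde F$ of the induced fibration on the blow-up $\widetilde X$ has genus at least $2$, so that $K_{\widetilde X}\widetilde F=2g(\widetilde F)-2\ge 2$ there. But you then slide back to $X$ and assert ``adjunction gives $K_XF\ge 2$''; this does not follow. Writing $K_{\widetilde X}=\pi^*K_X+E$, one has $K_XF=\pi^*K_X\cdot\widetilde F=K_{\widetilde X}\widetilde F-E\cdot\widetilde F$, and $E\cdot\widetilde F>0$ whenever $|M|$ has base points. Concretely, take $X$ minimal of general type with $K_X^2=1$, $p_g=2$, and $L=K_X$: then $|L|$ is a pencil with a single base point, the general member $F$ satisfies $F^2=K_XF=1$, your step $d+1\le \tfrac12\,dK_XF+1$ fails, and indeed the first bound $K_XL/2+1=3/2<2=h^0(L)$ is violated. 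It is the \emph{second} bound that holds here. The paper does not claim $K_XF\ge 2$: it treats the three possibilities $K_XF\ge 2$, $K_XF=1$, $K_XF=0$ separately. When $K_XF=1$, Hodge plus parity force $F^2=K_X^2=1$, and then $h^0(L)\le K_XL+1\le (K_XL)^2/2+1=(K_XL)^2/(2K_X^2)+1$, landing in the second branch of the maximum; $K_XF=0$ is shown to be impossible.

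For $\dim W=2$ your restriction-plus-Clifford approach is genuinely different from the paper's and, as you yourself flag, runs into the difficulty that the general $C\in|M|$ may be singular at the base points, so the usual Clifford inequality is not immediately available. The paper sidesteps this entirely by bounding $M^2$ through the image surface: $M^2\ge(\deg\varphi_L)\deg W$ with $\deg W\ge h^0(L)-2$, and if $\deg\varphi_L=1$ then $W$, being birational to a surface of general type, is not ruled, forcing $\deg W\ge 2h^0(L)-4$. Either way $M^2\ge 2h^0(L)-4$, and then $K_XL\ge K_XM\ge\sqrt{M^2K_X^2}$ gives the second bound with no curve-theoretic subtleties.
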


\begin{proof}
We can first assume that $h^0(L)\geq 3$.

Case A. $\dim W=1$. We can write
$L\thicksim\sum_{i=1}^{a}F_i+V\equiv aF+V$, where $a\geq h^0(L)-1$,
the $F_i's$ are the fibers of $\varphi_L$ and $F^2\geq 0$. Because
of the nefness of $K_X$ we see that
$$LK_X=aFK_X+VK_X\geq(h^0(L)-1)FK_X.$$ This implies $LK_X\geq 2FK_X$. When
$FK_X\geq 2$, we get $h^0(L)\leq LK_X/2+1$.

When $FK_X=1$, we have $F^2K_X^2\leq(FK_X)^2=1$ and $LK_X\geq 2$.
But since $FK_X\equiv F^2(\bmod~2)$, this implies $F^2=1$ and
$K_X^2=1$. Hence
$$h^0(L)\leq LK_X+1\leq \frac{(LK_X)^2}{2}+1=\frac{(LK_X)^2}{2K_X^2}+1.$$

When $FK_X=0$, we get $F^2\leq 0$ by Hodge's index theorem. Thus we
have $F^2=0$ and $F\equiv 0$. Hence $F=0$. It is absurd.

Case B. $\dim W=2$. In this case, we have $$M^2\geq
(\deg\varphi_L)(\deg W)\geq (\deg\varphi_L)(h^0(L)-2).$$ When
$\deg\varphi_L\geq 2$, we obtain $M^2\geq 2h^0(L)-4$. When
$\deg\varphi_L=1$, because $X$ is a surface of general type, $W$ is
not a ruled surface. Hence $\deg W\geq 2n-2=2h^0(L)-4$. This implies
$M^2\geq 2h^0(L)-4$. We obtain
$$LK_X=MK_X+VK_X\geq MK_X\geq\sqrt{M^2K_X^2}\geq\sqrt{(2h^0(L)-4)K_X^2}.$$
Therefore we conclude that $h^0(L)\leq (K_XL)^2/2K_X^2+2$.
\end{proof}

The following Castelnuovo type inequality is standard (cf.
\cite[Lemma 2.1]{Ko1}).
\begin{lemma}\label{lemma1.3}
Let $S$ be a smooth projective surface, $D$ a divisor on $S$ such
that $|D|$ defines a birational map of $S$ onto the image. If $|D|$
has no fixed part and $(K_S-D)D\geq 0$, then $D^2\geq 3h^0(D)-7$.
\end{lemma}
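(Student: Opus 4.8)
The plan is to reduce the statement to the classical Castelnuovo genus bound for curves in projective space by cutting $S$ with a general member of $|D|$. Since $\varphi_D$ is birational onto its image, that image $W=\varphi_D(S)\subset\mathbf{P}^{h^0(D)-1}$ is a nondegenerate surface, which already forces $h^0(D)\ge 3$. As $|D|$ has no fixed part its base locus is finite, so by Bertini a general member $C\in|D|$ is an integral curve and a general hyperplane section $\Gamma=W\cap H$ is an integral nondegenerate curve in $\mathbf{P}^{h^0(D)-2}$; moreover $C$ maps birationally onto $\Gamma$.

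I would then read off the numerical data of this slice. Adjunction gives $2g(C)-2=(K_S+D)D=K_SD+D^2$, and the hypothesis $(K_S-D)D\ge 0$, i.e. $K_SD\ge D^2$, yields the crucial lower bound $g(C)\ge D^2+1$ on the genus. Twisting the structure sequence of $C$ by $D$ gives $0\to\mathcal{O}_S\to\mathcal{O}_S(D)\to\mathcal{O}_C(D)\to 0$, whence $h^0(\mathcal{O}_C(D))\ge h^0(D)-1$ since $h^0(\mathcal{O}_S)=1$. Setting $A=\mathcal{O}_C(D)$, the complete system $|A|$ contains the restriction of $|D|$ and so still maps $C$ birationally; thus $\Gamma$ is a nondegenerate integral curve in $\mathbf{P}^r$ with
\[ r=h^0(A)-1\ge h^0(D)-2,\qquad \delta:=\deg\Gamma\le\deg A=D^2,\qquad g_{\mathrm{geom}}(\Gamma)=g(C)\ge D^2+1\ge\delta+1. \]

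The heart of the matter is Castelnuovo's inequality: any integral nondegenerate curve of degree $\delta$ in $\mathbf{P}^r$ has geometric genus at most $\pi(\delta,r)=\binom{m}{2}(r-1)+m\varepsilon$, where $\delta-1=m(r-1)+\varepsilon$ with $0\le\varepsilon<r-1$. Feeding in our lower bound $g_{\mathrm{geom}}(\Gamma)\ge\delta+1$ forces $\pi(\delta,r)\ge\delta+1$, and a short inspection of the Euclidean division shows this fails whenever $m\le 2$ (for $m=1$ one has $\pi=\delta-r$ and for $m=2$ one has $\pi=2\delta-3r+1$, both strictly below $\delta+1$ throughout their ranges) and also fails at $\delta=3r-2$. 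Hence $\delta\ge 3r-1$, and therefore
\[ D^2\ge\delta\ge 3r-1\ge 3(h^0(D)-2)-1=3h^0(D)-7. \]

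The main obstacle is precisely this last numerology: one must check that the strong speciality $g_{\mathrm{geom}}(\Gamma)\ge\delta+1$ pushes $m$ up to at least $3$, since that is exactly what upgrades the naive Clifford-type estimate $D^2\ge 2h^0(D)-4$ (obtained from Clifford on $A$) to the sharp $3h^0(D)-7$. The constant is pinned down by the extremal case $m=3,\ \varepsilon=1$, realised by a smooth quintic surface $W\subset\mathbf{P}^3$ with $D$ a hyperplane section, where $D^2=5=3\cdot 4-7$ and $(K_W-D)D=0$. A secondary, more routine point to address is that a general member $C$ be smooth, so that its arithmetic genus computed by adjunction equals the geometric genus of $\Gamma$; this is where the no-fixed-part hypothesis enters, and in the presence of base points one reduces to the base-point-free case or passes to the normalisation of $\Gamma$.
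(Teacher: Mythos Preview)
The paper does not actually prove this lemma; it records it as standard and cites Konno \cite[Lemma 2.1]{Ko1}. Your argument---restrict to a general member $C\in|D|$, observe that $(K_S-D)D\ge 0$ forces $p_a(C)\ge D^2+1$, and feed this into Castelnuovo's genus bound for the image curve $\Gamma\subset\mathbf{P}^{r}$---is exactly the classical route, and your numerology is correct: the inequality $\pi(\delta,r)\ge\delta+1$ does rule out $m\le 2$ and the boundary value $\delta=3r-2$, giving $\delta\ge 3r-1\ge 3h^0(D)-7$.

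The one point that needs care is the one you flag yourself. If $|D|$ has base points the general $C$ may be singular, and then adjunction only bounds the arithmetic genus while Castelnuovo bounds the geometric genus; the inequality $g_{\mathrm{geom}}(\Gamma)\ge D^2+1$ is not yet justified. The clean fix is to blow up the finite base locus and run the argument on the proper transform $\tilde D$. A single blow-up at a base point of multiplicity $m$ gives $\tilde D^2=D^2-m^2$ and $K_{\tilde S}\tilde D=K_SD+m$, so $(K_{\tilde S}-\tilde D)\tilde D=(K_S-D)D+m+m^2\ge 0$ is preserved; iterating, one obtains a base-point-free $\tilde D$ with smooth general member, still birational onto its image, and the argument yields $D^2\ge\tilde D^2\ge 3h^0(\tilde D)-7=3h^0(D)-7$.
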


\begin{proposition}\label{proposition1.5}
If $K_XL\geq K_X^2$, then $h^0(L)\leq (K_XL)^2/2K_X^2+2$. If $0\leq
K_XL\leq K_X^2$, then $h^0(L)\leq K_XL/2+2$. If one of the
conditions holds, then $\varphi_L$ is generically $2$ to $1$ onto a
surface of minimal degree in $\mathbf{P}^{h^0(L)-1}$.
\end{proposition}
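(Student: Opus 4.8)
The plan is to read both numerical bounds straight off Proposition \ref{proposition1.2} and then to extract the geometric conclusion by examining when its chain of estimates degenerates to equalities. The two stated inequalities are just a case-split of the maximum in Proposition \ref{proposition1.2}: comparing $K_XL/2+1$ with $(K_XL)^2/2K_X^2+2$, the quadratic term dominates exactly when $K_XL\ge K_X^2$. Hence for $K_XL\ge K_X^2$ the maximum equals $(K_XL)^2/2K_X^2+2$, while for $0\le K_XL\le K_X^2$ one has $(K_XL)^2/2K_X^2\le K_XL/2$, so both terms are $\le K_XL/2+2$. This disposes of the bounds with no further computation, and the real content is the equality statement.

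For the equality analysis I would revisit the proof of Proposition \ref{proposition1.2}. Case A ($\dim W=1$) yields only $h^0(L)\le K_XL/2+1$ (and $\le (K_XL)^2/2K_X^2+1$ in the subcase $FK_X=1$, $K_X^2=1$), both strictly below the present bounds; so equality forces $\dim W=2$. In Case B I would then force every link of
$$K_XL\ge MK_X\ge\sqrt{M^2K_X^2}\ge\sqrt{(2h^0(L)-4)K_X^2}$$
to be an equality. This gives three consequences: $VK_X=0$, so the fixed part is $K_X$-orthogonal; equality in Hodge's index theorem, whence $M\equiv\lambda K_X$ for some $\lambda>0$; and $M^2=(\deg\varphi_L)(\deg W)=2h^0(L)-4$.

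The remaining task is to identify $\deg\varphi_L$. If $\deg\varphi_L=2$ then $\deg W=h^0(L)-2$, which is the smallest possible degree for a nondegenerate surface in $\mathbf{P}^{h^0(L)-1}$; thus $W$ has minimal degree and $\varphi_L$ is generically $2$-to-$1$ onto it, precisely the asserted conclusion. So everything reduces to excluding the birational case $\deg\varphi_L=1$, in which the equality above would read $\deg W=2h^0(L)-4$. Here I would apply the Castelnuovo bound of Lemma \ref{lemma1.3} to the movable part $M$ (on a resolution on which it becomes base point free, which at equality introduces no correction since $M^2=\deg W$ already): it gives $M^2\ge 3h^0(L)-7$, contradicting $M^2=2h^0(L)-4$ as soon as $h^0(L)\ge 4$.

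The hard part is checking the hypothesis $(K_X-M)M\ge0$ of Lemma \ref{lemma1.3} and treating the boundary value $h^0(L)=3$. From $M\equiv\lambda K_X$ and $K_XM=K_XL$ one computes $(K_X-M)M=\lambda(1-\lambda)K_X^2$, which is nonnegative exactly when $\lambda=K_XL/K_X^2\le1$. This is automatic for $0\le K_XL\le K_X^2$ and at the boundary $K_XL=K_X^2$, so the Castelnuovo argument excludes the birational case throughout that range. When $K_XL>K_X^2$ strictly one has $\lambda>1$ and Lemma \ref{lemma1.3} is no longer available; in this regime I expect the principal obstacle, namely showing directly that a surface of general type cannot map birationally onto a (necessarily non-ruled) surface of the extremal degree $2h^0(L)-4$ while meeting the numerical equality, which amounts to sharpening the non-ruled degree estimate invoked in Proposition \ref{proposition1.2}. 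The value $h^0(L)=3$, where $3h^0(L)-7=2h^0(L)-4$ and $W$ spans only $\mathbf{P}^2$, should be handled separately by hand.
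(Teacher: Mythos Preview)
Your approach is essentially identical to the paper's. The paper deduces both inequalities from Proposition~\ref{proposition1.2} by the same comparison of the two terms in the maximum, and analyses equality exactly as you do: Case~A of Proposition~\ref{proposition1.2} is strictly below the stated bounds, so $\dim W=2$; then the chain in Case~B collapses to $M^2=2h^0(L)-4$, $(MK_X)^2=M^2K_X^2$, $VK_X=0$, hence $|M|$ is base point free and $M\equiv rK_X$. In Case~2 ($0\le K_XL\le K_X^2$) the paper observes, as you implicitly do, that equality together with $\dim W=2$ forces $K_XL=K_X^2$, so one is on the boundary of Case~1 with $r=1$ and $(K_X-M)M=0$.

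Two small points where the paper is more explicit than your proposal. First, the boundary case $h^0(L)=3$ is dispatched instantly: if $\deg\varphi_L=1$ then $W=\mathbf{P}^2$, so $X$ would be rational, contradicting general type. Second, and more interesting, the paper simply applies Lemma~\ref{lemma1.3} in Case~1 to rule out $\deg\varphi_L=1$ when $h^0(M)\ge4$, \emph{without} verifying the hypothesis $(K_X-M)M\ge0$. Your observation that this hypothesis fails when $K_XL>K_X^2$ (since then $r>1$ and $(K_X-M)M=r(1-r)K_X^2<0$) is entirely correct; the paper does not address it. So the ``principal obstacle'' you flag is not something the paper resolves by an alternative argument---it invokes Lemma~\ref{lemma1.3} regardless. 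Whether one reads this as the paper tacitly relying on a stronger form of the Castelnuovo bound from \cite{Ko1}, or as a point the paper glosses over, your plan and the paper's proof coincide step for step.
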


\begin{proof}
Case 1. $K_XL\geq K_X^2$. This implies $(K_XL)^2/2K_X^2+2\geq
K_XL/2+2$. By Proposition \ref{proposition1.2}, we have $h^0(L)\leq
(K_XL)^2/2K_X^2+2$. When the equality holds, from the proof of
Proposition \ref{proposition1.2}, we obtain $\dim W=2$,
$M^2=2h^0(L)-4$, $(MK_X)^2=M^2K_X^2$ and $VK_X=0$. Hence $|M|$ is
base point free, $V$ is a sum of some $(-2)$-curves and $M\equiv
rK_X$ for some rational number $r$.

Assume $\deg\varphi_L=1$ and $h^0(M)\geq 4$. Then by Lemma
\ref{lemma1.3}, we have $2h^0(M)-4=M^2\geq 3h^0(M)-7$, i.e.,
$h^0(M)\leq 3$. This is a contradiction.

Assume $\deg\varphi_L=1$ and $h^0(M)\leq 3$. Then since $\dim W=2$,
we have $h^0(M)=3$ and $W=\mathbf{P}^2$. Hence $X$ is a rational
surface. It contradicts our assumption on $X$.

Therefore $\deg\varphi_L=2$ and $\deg W=h^0(L)-2$. Thus $W$ is a
surface of minimal degree.

Case 2. $K_XL\leq K_X^2$. In this case we have
$(K_XL)^2/2{K_X}^2+2\leq K_XL/2+2$. By Proposition
\ref{proposition1.2}, we obtain $h^0(L)\leq K_XL/2+2$. When the
equality holds, we also have $\dim W=2$. Therefore
$(K_XL)^2/2{K_X}^2+2=K_XL/2+2$, i.e., $K_XL= K_X^2$. Thus we can
finish our proof similarly as Case 1.
\end{proof}

Now we will prove Theorem \ref{theorem0.1}.
\begin{proof}[Proof of Theorem $\ref{theorem0.1}$]
Since $h^0(K_X-L)=h^2(L)>0$, we have $(K_X-L)K_X\geq 0$. By
Proposition \ref{proposition1.5}, we get $h^0(L)\leq K_XL/2+2$.

If $h^0(L)=K_XL/2+2$, we have $K_X^2=M^2=2h^0(L)-4$ and
$(MK_X)^2=M^2K_X^2$. Therefore $M\equiv K_X$. But since
$h^0(K_X-M)\geq h^0(K_X-L)>0$, we know that $M\sim K_X$. Hence
$h^0(-V)=h^0(M-L)=h^0(K_X-L)>0$. This implies $V=0$ and $L=M\sim
K_X$. It contradicts the assumption $L\nsim K_X$. Therefore we
obtain $h^0(L)\leq (K_XL-1)/2+2$.

If $h^0(L)=(K_XL-1)/2+2$, we have $K_XL=2h^0(L)-3$. When $\dim W=1$,
we obtain $$K_XL=2h^0(L)-3\geq (h^0(L)-1)FK_X.$$ This implies
$FK_X=1$. Since $F^2{K_X}^2\leq(FK_X)^2=1$ and $FK_X\equiv
F^2\pmod2$, we have $F^2=K_X^2=FK_X=1$. Thus $F^2K_X^2=(FK_X)^2=1$.
This implies $F\equiv K_X$. Since $h^0(K_X-F)\geq h^0(K_X-L)>0$, we
know that $F\sim K_X$. Hence $L\sim K_X$. It also contradicts the
assumption $L\nsim K_X$. When $\dim W=2$, we have $M^2\geq
2h^0(L)-4=K_XL-1\geq K_XM-1$. Since $M^2\equiv MK_X\pmod2$, we get
$M^2\geq K_XM$. Because $\dim W=2$, we can find a reduced and
irreducible curve in $|M|$. Hence $M$ is a nef divisor. Since
$h^0(K_X-M)\geq h^0(K_X-L)>0$, we have $(K_X-M)M\geq 0$ and
$(K_X-M)K_X\geq 0$. It follows that $M^2\leq K_XM\leq K_X^2$. Hence
$M^2=K_XM\leq K_X^2$. By Hodge's index theorem, we get $M^2K_X^2\leq
(K_XM)^2=(M^2)^2$, i.e., $K_X^2\leq M^2$. Therefore $K_X^2=M^2=K_XM$
and $M^2K_X^2=(K_XM)^2$. Thus $M\equiv K_X$. Because $h^0(K_X-M)>0$
and $M\sim K_X$, we know that $M\sim K_X\sim L$. It contradicts the
assumption $L\nsim K_X$ again. Hence we conclude that
$$h^0(L)\leq\frac{K_XL-2}{2}+2=\frac{K_XL}{2}+1.$$

Now we assume the equality holds, i.e., $K_XL=2h^0(L)-2$. If
$h^0(L)=1$, then $K_XL=0$. Hence $L$ is a sum of $(-2)$-curves.

When $\dim W=1$, we have
$$2h^0(L)-2=LK_X=aFK_X+VK_X\geq(h^0(L)-1)FK_X.$$
This implies $K_XF\leq 2$. If $K_XF=1$, by Hodge's index theorem, we
have $F^2K_X^2\leq (FK_X)^2=1$. This implies $F^2=K_X^2=1$. But
$K_X^2\geq K_XL=2h^0(L)-2\geq2$. It is impossible. Hence we have
$K_XF=2$. It follows that $F^2K_X^2\leq (FK_X)^2=4$. Since
$K_X^2\geq 2$ and $FK_X\equiv F^2\pmod2$, we obtain $F^2=0$ or
$F^2=2$.

If $F^2=2$, then $K_X^2=K_XL=2$. Thus $F^2K_X^2=(K_XF)^2=4$. By
Hodge's index theorem, we know that $F\equiv K_X$. This implies
$V\sim0$ and $K_X\sim L\sim F$. It contradicts the assumption
$L\nsim K_X$.

If $F^2=0$, then the movable part of $|L|$ is base point free. Since
$K_XF=2$, we conclude that $a=h^0(L)-1$, $W\cong\mathbf{P}^1$ and
$g(F)=(F^2+FK_X)/2+1=2$. Therefore, the general fiber of $\varphi_L:
X\rightarrow W\cong\mathbf{P}^1$ is an irreducible smooth curve of
genus $2$.

When $\dim W=2$, we have $h^0(L)\geq 3$ and $K_XL=2h^0(L)-2\geq4$.
Since $M^2\geq 2h^0(L)-4=K_XL-2\geq K_XM-2$, $K_XM\geq M^2$ and
$M^2-K_XM$ is even, we know that $M^2=K_XM$ or $M^2=K_XM-2$.

If $M^2=K_XM$, the inequality $(K_XM)^2\geq K_X^2M^2$ implies that
$M^2\geq K_X^2$. Since $K_X^2\geq K_XM=M^2$, we have
$K_X^2=M^2=K_XM$. By Hodge's index theorem, we obtain $M\equiv K_X$.
Since $h^0(K_X-M)>0$, we obtain $L\sim M\sim K_X$. It contradicts
the assumption $L\nsim K_X$.

If $M^2=K_XM-2=2h^0(M)-4$, we have that $|M|$ is base point free and
$\varphi_L$ is generically $2$ to $1$ onto a surface of minimal
degree in $\mathbf{P}^{h^0(L)-1}$.
\end{proof}

\section{Clifford type indices on a surface}\label{sec:3}
For a smooth connected projective curve, we have an invariant, the
Clifford index, introduced by Martens \cite{Ma}. It plays an
important role in the study of curves. Because of Theorems
\ref{theorem0.1} and \ref{theorem0.2}, we can define two indices of
Clifford type on a smooth minimal surface X of general type.
\begin{definition}
For a divisor $L$ on $X$, we define two indices $\alpha(L)$ and
$\beta(L)$ by
\begin{eqnarray*}
\alpha(L)&=&K_XL-2h^0(L)+2,\\
\beta(L) &=&q+\frac{1}{2}L(K_X-L)-h^1(L).
\end{eqnarray*}
\end{definition}

Note that by the Serre duality theorem, we have
$\beta(L)=\beta(K_X-L)$ and by the Reimann-Roch theorem, we have
$h^0(L)+h^0(K_X-L)=1+p_g-\beta(L)$ and
\begin{eqnarray*}
\alpha(L)+\alpha(K_X-L)&=&K_X^2-2(h^0(L)+h^0(K_X-L))+4 \\
                       &=&K_X^2-2(1+p_g-\beta(L))+4\\
                       &=&K_X^2-2p_g+2\beta(L)+2.
\end{eqnarray*}

Next we define indices $\alpha(X)$ and $\beta(X)$ for the surface
$X$.
\begin{definition}
Let $\mathcal{S}=\{L\in\Pic(X);h^0(L)\geq 2, h^0(K_X-L)\geq 2\}$, we
define $\alpha(X)$ and $\beta(X)$ by
\begin{eqnarray*}
\alpha(X)=
\begin{cases}\underset{L\in\mathcal{S}}{\min}~ \alpha(L) & \mathcal{S}\neq
\emptyset \\ \infty & \mathcal{S}=\emptyset
\end{cases}\\
\beta(X)=
\begin{cases}\underset{L\in\mathcal{S}}{\min}~ \beta(L) & \mathcal{S}\neq
\emptyset\\ \infty & \mathcal{S}=\emptyset.
\end{cases}
\end{eqnarray*}

\end{definition}

Similarly as in the curve case, we say that $L$ computes the index
$\alpha(X)$ or $\beta(X)$, if $\alpha(X)=\alpha(L)$ or
$\beta(X)=\beta(L)$, respectively.

\begin{remark}\label{remark2.3}
When $L$ computes $\alpha(X)$ or $\beta(X)$, we can always assume
$|L|$ has no fixed part. This assumption is convenient for our work.
The reason is as follows. Let $|L|=|M|+V$ be the decomposition into
its movable and fixed parts. If $L$ computes $\alpha(X)$, we have
$h^0(L)=h^0(M)$ and $VK_X\geq 0$. Therefore $K_XL-2h^0(X,L)+2\geq
K_XM-2h^0(X,M)+2$, i.e., $\alpha(L)\geq\alpha(M)$. If $L$ computes
$\beta(X)$, we have $h^0(L)+h^0(K_X-L)=1+p_g-\beta(X)$. But since
$h^0(L)+h^0(K_X-L)\leq h^0(M)+h^0(K_X-M)\leq 1+p_g-\beta(X)$, we
have $h^0(M)+h^0(K_X-M)=1+p_g-\beta(X)$. Hence $M$ computes
$\beta(X)$ too.
\end{remark}

\begin{example}
Let $S_d$ be a generic hypersurface of degree $d$ in $\mathbf{P}^3$.
$H$ denote the hyperplane section of $S_d$. When $d\geq5$, $S_d$ is
a minimal surface of general type and $K_{S_d}=(d-4)H$. In this
case, by the Noether-Lefschetz theorem, we have $\Pic(S_d)\cong
\mathbb{Z}H$. Hence $\alpha(S_5)=\beta(S_5)=\infty$.

Now we assume $d\geq6$. Let $n$ be an integer such that $1\leq n\leq
d-5$. Then we have $$h^0(nH)=\frac{1}{6}(n+1)(n+2)(n+3).$$ Thus we
obtain
\begin{eqnarray*}
\alpha(nH)&=&nHK_{S_d}-2h^0(nH)+2\\
          &=&nd(d-4)-\frac{1}{3}(n+1)(n+2)(n+3)+2.
\end{eqnarray*}
Hence $\alpha(S_d)={\min}_{1\leq n\leq
d-5}\alpha(nH)=\alpha(H)=d(d-4)-6$. We also have
\begin{eqnarray*}
\beta(nH)&=&p_g(S_d)+1-h^0(nH)-h^0((d-4-n)H)\\
         &=&-\frac{1}{2}d(n^2-(d-4)n).
\end{eqnarray*}
Therefore $\beta(S_d)={\min}_{1\leq n\leq
d-5}\beta(nH)=\beta(H)=d(d-5)/2$.
\end{example}

For surfaces with $\alpha=\infty$, we have the following theorem.
\begin{theorem}
If $S$ is a surface with $\alpha(S)=\infty$, then
$\alpha(S')=\infty$ for every small deformation $S'$ of $S$.
\end{theorem}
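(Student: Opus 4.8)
The plan is to argue by contradiction, in the form of a specialisation statement. Realise a small deformation as a smooth projective family $\pi:\mathcal{X}\to\Delta$ over a disk with $\mathcal{X}_0=S$, write $S_t=\mathcal{X}_t$, and recall that all fibres are again minimal of general type, so $K_{S_t}$ is big and nef throughout. Saying $\alpha(S)=\infty$ means that $\mathcal{S}(S)=\{L\in\Pic(S): h^0(L)\ge2,\ h^0(K_S-L)\ge2\}$ is empty, and the claim is that $\mathcal{S}(S_t)=\emptyset$ for all $t$ near $0$. If this failed, there would be a sequence $t_n\to0$ and line bundles $L_n\in\mathcal{S}(S_{t_n})$; I would then manufacture a bundle $L\in\mathcal{S}(S)$, contradicting $\alpha(S)=\infty$.

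First I would normalise the witnesses. Replacing $L_n$ by the movable part $M_n$ of $|L_n|$ keeps $h^0(M_n)=h^0(L_n)\ge2$ and only increases $h^0(K-M_n)\ge h^0(K-L_n)\ge2$, so I may assume each $L_n$ is nef with $|L_n|$ free of fixed part. Nefness gives $0\le L_n^2$, while $K_{S_{t_n}}$ big and nef together with $L_n$ and $K_{S_{t_n}}-L_n$ effective yields $0\le K L_n\le K^2$; the Hodge index inequality $(KL_n)^2\ge K^2 L_n^2$ then bounds $L_n^2\le K^2$. Fixing a relatively very ample $A$ on $\mathcal{X}/\Delta$ and an integer $m$ with $mK_{\mathcal{X}/\Delta}-A$ relatively effective on every fibre --- such $m$ exists uniformly because $\chi(mK-A)$ grows like $\tfrac12 m^2K^2>0$ while $h^2(mK-A)$ vanishes for all $t$ once $(1-m)K+A$ has negative degree against $K$ --- I get $L_n\cdot A\le m\,L_n\cdot K\le mK^2$. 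Thus $L_n^2$ and $L_n\cdot A$ lie in a fixed finite range, so the $L_n$ realise only finitely many Hilbert polynomials with respect to $A$.

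Next I would invoke the relative Picard scheme $\Pic_{\mathcal{X}/\Delta}$. The bounded Hilbert polynomials single out finitely many components, whose union $\mathcal{P}$ is proper over $\Delta$ (the fibres $\Pic^{\gamma}(S_t)$ are abelian varieties, hence compact). The $L_n$ give points $p_n\in\mathcal{P}$ over $t_n\to0$; by properness a subsequence converges to a point $p_\infty$ lying over $0$, which corresponds to a genuine line bundle $L$ on $S=S_0$. Finally, $h^0$ is upper semicontinuous along $\mathcal{P}$ (cohomology and base change, using a Poincar\'e bundle \'etale-locally), and precomposing with the isomorphism $p\mapsto[K_{\mathcal{X}/\Delta}]-p$ shows that $p\mapsto h^0(K_{S_t}-(\cdot))$ is upper semicontinuous as well. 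Hence $h^0(S,L)\ge\limsup_n h^0(S_{t_n},L_n)\ge2$ and $h^0(S,K_S-L)\ge\limsup_n h^0(S_{t_n},K_{S_{t_n}}-L_n)\ge2$, so $L\in\mathcal{S}(S)$ --- the desired contradiction.

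The substantive points, and where I expect the real work to sit, are the two boundedness inputs and their interaction with the varying Hodge structure: reducing to nef witnesses and extracting uniform bounds on $L_n^2$ and $L_n\cdot A$ (so that only finitely many components of the relative Picard scheme intervene), and then the properness-and-semicontinuity package. The latter is exactly the geometric incarnation of the Hodge-theoretic fact that a class which stays of type $(1,1)$ on fibres accumulating at $0$ remains algebraic on $S$; compactness of $\Pic^0$ is what lets me pass from the numerical class back to an honest line bundle with controlled sections, handling the $\Pic^0$-ambiguity that upper semicontinuity alone would not resolve. If one prefers to avoid the relative Picard scheme, the same conclusion can be reached by transporting the classes $c_1(L_n)$ through the local system $R^2\pi_*\mathbb{Z}$, observing that the locus where the limiting class is of type $(1,1)$ is a closed analytic subset of $\Delta$ containing $t_n\to0$ and hence all of $\Delta$, and then running the identical compactness-and-semicontinuity argument.
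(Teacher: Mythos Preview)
Your argument is correct and rests on the same engine as the paper's proof: the relative Picard scheme together with upper semicontinuity of cohomology. The paper proceeds more directly, setting $W_{2,2}=\{y\in\Pic_{\mathcal{X}/\Delta}:h^0(\mathcal{L}_y)\ge2,\ h^2(\mathcal{L}_y)\ge2\}$ (using $h^2(L)$ in place of your $h^0(K-L)$, which is the same by Serre duality), observing that $W_{2,2}$ is closed by semicontinuity, and asserting that $\{p\in\Delta:W_{2,2}\cap\pi^{-1}(p)=\emptyset\}$ is open and contains $0$. What you do differently, and more carefully, is supply the boundedness step the paper leaves implicit: you pass to nef representatives, bound $L^2$, $K\!\cdot\!L$ and $L\!\cdot\!A$ uniformly, and conclude that only finitely many components of $\Pic_{\mathcal{X}/\Delta}$ are in play, so that the relevant piece is genuinely proper over $\Delta$. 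The paper's one-line openness claim tacitly needs exactly this properness (otherwise a closed subset of $\Pic_{\mathcal{X}/\Delta}$ need not have closed image in $\Delta$); your version makes it explicit. Beyond that, your sequential contradiction framing and the paper's closed-locus framing are two sides of the same coin.
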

\begin{proof}
Let $f:\mathcal{X}\rightarrow \Delta$ be a small deformation of
$\mathcal{X}_0=S$, $0\in \Delta$, such that the Picard scheme
$\Pic_{\mathcal{X}/\Delta}$ and the Poincar\'e line bundle
$\mathcal{L}$ on $\mathcal{X}\times\Pic_{\mathcal{X}/\Delta}$ exist
(cf. \cite{K}). Put

$$W_{m,n}=\{y\in\Pic_{\mathcal{X}/\Delta}~;h^0(\mathcal{L}_y)\geq m,
h^2(\mathcal{L}_y)\geq n\}.$$ By the semicontinuity theorem
\cite[Theorem 12.8]{Ha}, we know that $W_{m,n}$ is a closed
subscheme of $\Pic_{\mathcal{X}/\Delta}$. Consider the natural
morphism $\pi:\Pic_{\mathcal{X}/\Delta}\rightarrow \Delta$. Then
$\{p\in \Delta~;W_{m,n}\cap\pi^{-1}(p)=\emptyset\}$ is an open
subset of $\Delta$. Since $\alpha(S)=\infty$, we have $\{L\in
\Pic(S)~;h^0(L)\geq 2, h^2(L)\geq 2\}=\emptyset$. Hence
$W_{2,2}\cap\pi^{-1}(0)=\emptyset$ and $\{p\in
\Delta~;W_{2,2}\cap\pi^{-1}(p)=\emptyset\}\neq\emptyset$. Thus for
every $p\in\{p\in \Delta~;W_{2,2}\cap\pi^{-1}(p)=\emptyset\}$, we
have $\alpha(f^{-1}(p))=\infty$. This completes the proof of the
theorem.
\end{proof}

The above theorem tells us the surfaces with $\alpha=\infty$ form an
open subset of the moduli of surfaces. We now give some bounds for
$\alpha(X)$ and $\beta(X)$ as follows:

\begin{proposition}\label{proposition2.4}
If $\alpha(X)\neq\infty$, then $0\leq\alpha(X)\leq
K_X^2-2\chi(\mathcal{O}_X)+6$.
\end{proposition}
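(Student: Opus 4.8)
The plan splits along the two inequalities. For the lower bound $\alpha(X)\geq 0$ I would argue directly from Theorem \ref{theorem0.1}. Assuming $\mathcal{S}\neq\emptyset$, pick $L\in\mathcal{S}$ computing $\alpha(X)$. Then $h^0(L)\geq 2>0$ makes $L$ effective, while $h^0(K_X-L)=h^2(L)\geq 2>0$ makes $L$ special and forces $K_X-L\nsim 0$, i.e. $L\nsim K_X$. So Theorem \ref{theorem0.1} applies and gives $h^0(L)\leq K_XL/2+1$, which is exactly $\alpha(L)=K_XL-2h^0(L)+2\geq 0$. Hence $\alpha(X)=\alpha(L)\geq 0$.

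For the upper bound I would use the duality between $L$ and $K_X-L$. Summing the definition of $\alpha$ for these two divisors, the terms $K_XL$ and $K_X(K_X-L)$ combine to $K_X^2$, so
$$\alpha(L)+\alpha(K_X-L)=K_X^2+4-2\big(h^0(L)+h^0(K_X-L)\big).$$
Now choose $L\in\mathcal{S}$ maximizing $h^0(L)+h^0(K_X-L)$ (equivalently, computing $\beta(X)$; by Remark \ref{remark2.3} it may be taken fixed-part free). Since $K_X-L\in\mathcal{S}$ as well, both summands on the left are $\geq\alpha(X)$, and averaging yields
$$\alpha(X)\leq \tfrac12\big(\alpha(L)+\alpha(K_X-L)\big)=\tfrac12 K_X^2+2-\big(h^0(L)+h^0(K_X-L)\big).$$
Substituting $\chi(\mathcal{O}_X)=1-q+p_g$, the asserted bound $\alpha(X)\leq K_X^2-2\chi(\mathcal{O}_X)+6$ is therefore \emph{equivalent} to producing a single divisor $L\in\mathcal{S}$ with $h^0(L)+h^0(K_X-L)\geq 2\chi(\mathcal{O}_X)-\tfrac12 K_X^2-4$.

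So the whole proposition reduces to finding a decomposition $K_X\sim L+(K_X-L)$ whose two halves carry enough sections. The plan is to feed in Noether's inequality $K_X^2\geq 2p_g-4$ to lower the target, and to observe that whenever $K_X^2\geq 4\chi(\mathcal{O}_X)-16$ the crude estimate $h^0(L)\geq 2$ and $h^0(K_X-L)\geq 2$ on any element of $\mathcal{S}$ already suffices. The remaining range $K_X^2<4\chi(\mathcal{O}_X)-16$ forces $p_g$ large relative to $q$, and there I would analyze the canonical map $\varphi_{K_X}$: if $|K_X|$ is composed with a pencil $f:X\to C$, then as in the proof of Theorem \ref{theorem0.2} the movable part of $|K_X|$ is $f^*A$ with $h^0(A)=p_g$, and splitting $A=A_1+A_2$ on the curve $C$ with $h^0(A_i)\geq 2$ (possible since $p_g\geq 3$) and setting $L=f^*A_1$ gives $h^0(L)+h^0(K_X-L)\geq h^0(A_1)+h^0(A_2)$, which Riemann--Roch and Clifford on $C$ push close to the maximum $p_g+1$; if instead $\varphi_{K_X}$ is generically finite, I would use the Castelnuovo bound of Lemma \ref{lemma1.3} (birational case) or the scroll structure of a minimal-degree image (the $2$-to-$1$ case) to either verify $K_X^2\geq 4\chi(\mathcal{O}_X)-16$ outright or to extract the required $L$ from a pencil on the image.

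The hard part will be exactly this last construction in the generically finite regime, above all when $\varphi_{K_X}$ is birational with $p_g$ large: there is then no pencil to pull back, and one must instead exploit the two genuinely moving halves of the decomposition itself to push $h^0(L)+h^0(K_X-L)$ up to $2\chi(\mathcal{O}_X)-\tfrac12 K_X^2-4$. Everything else — the duality identity, the averaging, the lower bound from Theorem \ref{theorem0.1}, and the Noether reduction — is routine; the entire difficulty lies in showing that $\mathcal{S}\neq\emptyset$ already guarantees a sufficiently efficient splitting of the canonical class.
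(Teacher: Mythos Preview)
Your lower bound is fine and matches the paper. Your averaging identity
\[
\alpha(L)+\alpha(K_X-L)=K_X^2+4-2\bigl(h^0(L)+h^0(K_X-L)\bigr)
\]
is also exactly what the paper uses, and the reduction to lower--bounding $h^0(L)+h^0(K_X-L)$ is correct. The gap is in \emph{which} $L$ you pick and how you get that lower bound.

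You choose $L$ computing $\beta(X)$ and then try to manufacture enough sections by analysing $\varphi_{K_X}$ globally. That programme is incomplete: in the birational regime Lemma \ref{lemma1.3} only gives $K_X^2\ge 3p_g-7$, which does not force $K_X^2\ge 4\chi(\mathcal O_X)-16$ once $\chi\ge 7$, so your ``crude estimate'' range does not cover everything; and in the remaining cases your sketch (``extract the required $L$ from a pencil on the image'') is not an argument. You have correctly located the difficulty but not resolved it.

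The paper sidesteps all of this by choosing $L$ to compute $\alpha(X)$, not $\beta(X)$, and taking $|L|$ fixed--part free (Remark \ref{remark2.3}). Then the geometry of $\varphi_L$ itself supplies the missing lower bound via Riemann--Roch: if $\dim\varphi_L(X)=2$ one has $L^2\ge 2h^0(L)-4=K_XL-\alpha(X)-2$, so
\[
h^0(L)+h^0(K_X-L)\ \ge\ \chi(\mathcal O_X)+\tfrac12 L^2-\tfrac12 K_XL\ \ge\ \chi(\mathcal O_X)-\tfrac12\alpha(X)-1,
\]
and comparing with $h^0(L)+h^0(K_X-L)\le \tfrac12 K_X^2-\alpha(X)+2$ gives the bound immediately. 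The pencil case $\dim\varphi_L(X)=1$ needs a short extra argument (writing $L\equiv aF$, separating $F^2\ge 1$ from $F^2=0$, and in the latter case peeling off fibres to reduce to $h^0=2$), but it is all elementary once you have the right $L$. The point is that the control on $L^2-K_XL$ comes for free from the \emph{definition} of $\alpha(X)$ together with the degree of the image of $\varphi_L$; there is no need to touch $\varphi_{K_X}$ at all.
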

\begin{proof}
$\alpha(X)\geq 0$ is an easy consequence of Theorem
\ref{theorem0.1}. Suppose that $L$ computes $\alpha(X)$. Then we
have $\alpha(X)= K_XL-2h^0(X,L)+2$. By Remark \ref{remark2.3}, we
can assume $|L|$ has no fixed part. Let $W$ be the image of
$\varphi_L$.

Case A. $\dim W=1$. In this case, we have
$L\thicksim\sum_{i=1}^{a}F_i\equiv aF$, where the $F_i's$ are the
fibers of $\varphi_L$. Since
$$a\geq
h^0(L)-1=\frac{K_XL-\alpha(X)}{2}=\frac{aK_XF-\alpha(X)}{2},$$ we
have
\begin{equation}\label{1}
2a+\alpha(X)\geq aK_XF.
\end{equation}
By the Riemann-Roch theorem, we obtain
\begin{eqnarray}\label{A1}
h^0(L)+h^0(K_X-L)\geq\frac{a^2}{2}F^2-\frac{a}{2}K_XF+\chi(\mathcal{O}_X).
\end{eqnarray}
Since $h^0(L)=(K_XL-\alpha(X))/2+1$ and $h^0(K_X-L)\leq
(K_X(K_X-L)-\alpha(X))/2+1$, we get from (\ref{A1}) the inequality
\begin{eqnarray}\label{2}
K_X^2-2\chi(\mathcal{O}_X)+4+aK_XF-a^2F^2\geq 2\alpha(X).
\end{eqnarray}

If $F^2\geq 1$, we have $2a\leq a^2+1\leq a^2F^2+1$. This and
(\ref{1}) imply $aK_XF-a^2F^2\leq\alpha(X)+1$. Hence by (\ref{2}),
we get $\alpha(X)\leq K_X^2-2\chi(\mathcal{O}_X)+5$.

If $F^2=0$, then $|L|$ has no base points. We can assume that $F$ is
a smooth and irreducible curve. When $h^0(L)=2$, we have
$W=\mathbf{P}^1$, and $\alpha(X)+2=aK_XF$. By (\ref{2}), we get our
conclusion immediately. When $h^0(L)\geq 3$, from the standard exact
sequence
$$0\rightarrow\mathcal{O}_X(L-F)\rightarrow\mathcal{O}_X(L)
\rightarrow\mathcal{O}_{F}\rightarrow 0,$$ it follows that
$h^0(L-F)\geq h^0(L)-1\geq 2$. Therefore
$$\frac{1}{2}((a-1)K_XF-\alpha(X))+1\geq h^0(L-F)\geq h^0(L)-1=\frac{1}{2}(aK_XF-\alpha(X)).$$
This implies $K_XF\leq 2$. Since $F^2=0$, we have $K_XF=2$. Hence
$h^0(L-F)= h^0(L)-1=a-\alpha(X)/2$, for a general fiber $F$.
Inductively, we can get $h^0(L-iF)=h^0(L)-i$, for $1\leq i\leq
h^0(L)-1$. Let $k=h^0(L)-2$, then $h^0(L-kF)=2$. On one hand, by the
Riemann-Roch theorem, we obtain
\begin{eqnarray}\label{A2}
h^0(L-kF)+h^0(K_X-L+kF) & \geq & \frac{1}{2}(L-kF)(L-kF-K_X)+\chi(\mathcal{O}_X)\nonumber\\
                        &   =  & k-a+\chi(\mathcal{O}_X).
\end{eqnarray}
On the other hand,
\begin{eqnarray*}
h^0(L-kF)+h^0(K_X-L+kF) & \leq & 2+\frac{1}{2}K_X(K_X-L+kF)-\frac{1}{2}\alpha(X)+1\\
                        &   =  & \frac{1}{2}K_X^2+k-a-\frac{1}{2}\alpha(X)+3.
\end{eqnarray*}
Combining these two inequalities, we can get $\alpha(X)\leq
K_X^2-2\chi(\mathcal{O}_X)+6$.

Case B. $\dim W=2$. This case implies that $L^2\geq
2h^0(L)-4=K_XL-\alpha(X)-2$. Hence by the Riemann-Roch theorem, we
obtain
\begin{eqnarray}\label{B}
h^0(L)+h^0(K_X-L) & \geq & \frac{1}{2}L^2-\frac{1}{2}K_XL+\chi(\mathcal{O}_X)\nonumber\\
                  & \geq & -\frac{1}{2}\alpha(X)-1+\chi(\mathcal{O}_X).
\end{eqnarray}
Since
\begin{eqnarray*}
h^0(L)+h^0(K_X-L) & \leq & \frac{1}{2}(K_XL-\alpha(X))+1+\frac{1}{2}(K_X(K_X-L)-\alpha(X))+1\\
                  &   =  & \frac{1}{2}K_X^2-\alpha(X)+2,
\end{eqnarray*}
we obtain $K_X^2/2-\alpha(X)+2\geq
-\alpha(X)/2-1+\chi(\mathcal{O}_X)$, i.e., $\alpha(X)\leq
K_X^2-2\chi(\mathcal{O}_X)+6$.
\end{proof}

We can see easily the following corollary.

\begin{corollary}\label{corollary2.5}If
$\alpha(X)=K_X^2-2\chi(\mathcal{O}_X)+6$, $L$ computes $\alpha(X)$
and $|L|$ has no fixed part, then $|L|$ is base point free and one
of the following cases occurs.
\begin{enumerate}
\item $h^0(L)=2$, $h^1(L)=0$ and $K_X-L$ also computes $\alpha(X)$.
\item $h^0(L)\geq3$ and $|L|$ is composed of a pencil of genus $2$.
\item $h^1(L)=0$, $K_X-L$ also computes $\alpha(X)$ and $\varphi_L$ is
generically $2$ to $1$ onto a surface of minimal degree in
$\mathbf{P}^{h^0(L)-1}$.
\end{enumerate}
\end{corollary}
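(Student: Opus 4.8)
The plan is to read the equality case off directly from the proof of Proposition~\ref{proposition2.4}, since the bound $\alpha(X)\le K_X^2-2\chi(\mathcal{O}_X)+6$ was produced there by a chain of inequalities, and I only need to identify which slacks must vanish. Throughout I keep that proof's notation: $W=\varphi_L(X)$, and since $|L|$ has no fixed part I may take $M=L$. The crucial preliminary observation is that the bound rested on exactly two sources of slack: the comparison $h^0(K_X-L)\le\tfrac12\bigl(K_X(K_X-L)-\alpha(X)\bigr)+1$ coming from $\alpha(K_X-L)\ge\alpha(X)$, and the term $h^1(L)\ge 0$ hidden inside the Riemann--Roch identity. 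Equality in the global bound forces each of these to be an equality, which translates respectively into ``$K_X-L$ computes $\alpha(X)$'' and ``$h^1(L)=0$''. With this in hand the three cases will separate according to $\dim W$.

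First I would treat Case~A, $\dim W=1$, where $L\equiv aF$ with $a=h^0(L)-1$. The subcase $F^2\ge 1$ yielded the strictly stronger bound $\alpha(X)\le K_X^2-2\chi(\mathcal{O}_X)+5$, so it is excluded by the hypothesis; hence $F^2=0$, and this already forces $|L|$ to be base point free. If $h^0(L)=2$ then $a=1$ and $L\sim F$, and since inequality~\eqref{2} is now an equality both slacks above vanish, giving $h^1(L)=0$ together with $K_X-L$ computing $\alpha(X)$: this is case~1. If $h^0(L)\ge 3$, the computation in the proof of Proposition~\ref{proposition2.4} delivers $K_XF=2$ alongside $F^2=0$, so the general fibre has genus $\tfrac12(F^2+K_XF)+1=2$, and $|L|$ is composed of a pencil of genus $2$: this is case~2.

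Next I would treat Case~B, $\dim W=2$. Here $|L|$ without fixed part makes the general member irreducible, so $L$ is nef and $L^2\ge 2h^0(L)-4$, and the bound came from chaining the Riemann--Roch lower estimate~\eqref{B} against the upper estimate $h^0(L)+h^0(K_X-L)\le\tfrac12 K_X^2-\alpha(X)+2$. Equality forces every intermediate inequality to be sharp, which gives simultaneously $h^1(L)=0$, the numerical equality $L^2=2h^0(L)-4$, and ``$K_X-L$ computes $\alpha(X)$''. To convert $L^2=2h^0(L)-4$ into the geometric statement I would argue exactly as in the proof of Proposition~\ref{proposition1.5}: a birational $\varphi_L$ with $h^0(L)\ge 4$ is ruled out by Lemma~\ref{lemma1.3} (which applies since $L$ is nef and $K_X-L$ is effective, so $(K_X-L)L\ge 0$), while $h^0(L)=3$ would make $W=\mathbf{P}^2$ and $X$ rational, contradicting general type. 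Thus $\deg\varphi_L=2$ onto a surface of degree $h^0(L)-2$, i.e.\ of minimal degree, and the exactness of the degree count leaves no room for base points, so $|L|$ is base point free: this is case~3.

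The one genuinely delicate point is bookkeeping rather than geometry. I must verify that the two slacks isolated in the opening paragraph are truly the \emph{only} inequalities invoked in Proposition~\ref{proposition2.4}, so that sharpness of the overall bound is equivalent to sharpness of each of them separately; in Case~B there is the additional slack $\tfrac12\bigl(L^2-2h^0(L)+4\bigr)$, which must also be tracked. Carrying these slacks explicitly, instead of silently absorbing them into the stated inequalities, is exactly what makes the identifications $h^1(L)=0$ and ``$K_X-L$ computes $\alpha(X)$'' drop out, and it is the step where a hasty rereading of the earlier proof could quietly lose a hypothesis of the corollary.
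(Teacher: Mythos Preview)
Your proposal is correct and is exactly the argument the paper intends: the paper gives no proof beyond ``We can see easily the following corollary,'' and what you have written is precisely the tracing of equality cases through the proof of Proposition~\ref{proposition2.4} that the reader is meant to supply. One harmless slip: in the $h^0(L)=2$ subcase of Case~A you assert $a=1$ and $L\sim F$, but the Stein factorization $X\to C\to\mathbf{P}^1$ could have $\deg(C\to\mathbf{P}^1)>1$, so $a$ need not equal~$1$; fortunately you never use this, since equality in~\eqref{2} already forces $h^1(L)=0$ and $\alpha(K_X-L)=\alpha(X)$ regardless of~$a$.
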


\begin{proposition}\label{proposition2.6}
If $\alpha(X)\neq\infty$, then $0\leq\beta(X)\leq \alpha(X)/2+q+1$.
\end{proposition}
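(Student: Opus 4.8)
The plan is to prove the two inequalities separately. For the lower bound $\beta(X)\geq0$, I would observe that every $L\in\mathcal{S}$ satisfies $h^0(L)\geq2>0$ and $h^0(K_X-L)=h^2(L)\geq2>0$, so $L$ is a special divisor and inequality \eqref{I} applies, giving $h^1(L)\leq q+\frac{1}{2}L(K_X-L)$, i.e. $\beta(L)\geq0$. Taking the minimum over $\mathcal{S}$ yields $\beta(X)\geq0$.

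For the upper bound I would let $L$ compute $\alpha(X)$ and, by Remark \ref{remark2.3}, assume $|L|$ has no fixed part. Since $\beta(X)\leq\beta(L)$, it suffices to bound $\beta(L)$. Using $\beta(L)=p_g+1-h^0(L)-h^0(K_X-L)$ and $\alpha(X)=\alpha(L)=K_XL-2h^0(L)+2$, a direct rearrangement (together with $p_g-q-1=\chi(\mathcal{O}_X)-2$) shows that the desired inequality $\beta(L)\leq\alpha(X)/2+q+1$ is equivalent to
$$\frac{1}{2}K_XL+h^0(K_X-L)\geq\chi(\mathcal{O}_X)-2.$$
This is the inequality I would aim to establish. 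The main case is when $L^2\geq2h^0(L)-4$: here I would apply the Riemann–Roch theorem to $K_X-L$, using $h^2(K_X-L)=h^0(L)$, to get $h^0(K_X-L)\geq\chi(K_X-L)-h^0(L)=\frac{1}{2}(L^2-K_XL)+\chi(\mathcal{O}_X)-h^0(L)$. Substituting this gives $\frac{1}{2}K_XL+h^0(K_X-L)\geq\frac{1}{2}L^2+\chi(\mathcal{O}_X)-h^0(L)\geq\chi(\mathcal{O}_X)-2$, as required. This case covers $\dim W=2$ (where $L^2\geq2h^0(L)-4$ is obtained in Proposition \ref{proposition1.2}), the pencil case with $F^2\geq1$ (where $L\equiv aF$ with $a\geq h^0(L)-1$ forces $L^2\geq(h^0(L)-1)^2\geq2h^0(L)-4$), and the base-point-free pencil case with $h^0(L)\leq2$.

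The remaining, and hardest, case is the genus-$2$ pencil: $\dim W=1$, $F^2=0$ and $h^0(L)\geq3$, which by the analysis in the proof of Proposition \ref{proposition2.4} forces $K_XF=2$ and a genus-$2$ fibration with $L\equiv aF$, $h^0(L)=a+1$, so that $\alpha(L)=0$ and hence $\alpha(X)=0$. Here the displayed inequality can fail for $L$ itself, so I would instead test a single fiber $F$. Riemann–Roch for $K_X-F$, together with $h^2(K_X-F)=h^0(F)=2$ and $F^2=0$, $K_XF=2$, gives $h^0(K_X-F)\geq\chi(\mathcal{O}_X)-3$. If $F\in\mathcal{S}$, then $\beta(X)\leq\beta(F)=p_g-1-h^0(K_X-F)\leq q+1$, which is the assertion since $\alpha(X)=0$. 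If $F\notin\mathcal{S}$, then $h^0(K_X-F)\leq1$, forcing $\chi(\mathcal{O}_X)\leq4$, and I would use the universal bound $\beta(X)\leq p_g-3$ (valid because every $L\in\mathcal{S}$ has $h^0(L),h^0(K_X-L)\geq2$), which gives $\beta(X)\leq p_g-3\leq q$. The principal obstacle is exactly this pencil case: the clean Riemann–Roch estimate breaks down, and one must exploit the fibration structure recorded in Proposition \ref{proposition2.4} and split according to whether a fiber already lies in $\mathcal{S}$.
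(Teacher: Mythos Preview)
Your lower bound and the ``main case'' $L^2\geq 2h^0(L)-4$ are correct and amount to the same Riemann--Roch computation as the paper (its inequalities (\ref{A1}), (\ref{B}) combined with $h^0(L)+h^0(K_X-L)\leq 1+p_g-\beta(X)$). Your reformulation via $\tfrac12K_XL+h^0(K_X-L)\geq\chi(\mathcal O_X)-2$ is a clean way to package this.

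The gap is in the remaining case $\dim W=1$, $F^2=0$, $h^0(L)\geq 3$. The analysis in Proposition~\ref{proposition2.4} yields $K_XF=2$, but it does \emph{not} force $h^0(L)=a+1$; that equality is equivalent to $\alpha(X)=0$ and holds only when the Stein factorization base $C$ is $\mathbf P^1$. If $g(C)\geq 1$ then $a>h^0(L)-1$ is possible and, more to the point, $h^0(F)=h^0(f^*\mathcal O_C(p))=1$, so $F\notin\mathcal S$ for the reason $h^0(F)<2$, not $h^0(K_X-F)\leq 1$. Your Riemann--Roch bound $h^0(K_X-F)\geq\chi(\mathcal O_X)-3$ already used $h^0(F)=2$, and your dichotomy ``$F\notin\mathcal S\Rightarrow h^0(K_X-F)\leq1$'' is simply false in this situation; the ensuing bound $\chi(\mathcal O_X)\leq4$ and hence $\beta(X)\leq q$ is unsupported. (When $g(C)=0$ one in fact always has $h^0(K_X-F)\geq h^0(K_X-L)\geq2$, so your ``$F\notin\mathcal S$'' branch is vacuous there.)

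The paper's remedy is to test not $F$ but $L-kF$ with $k=h^0(L)-2$. By the inductive step in Proposition~\ref{proposition2.4} one has $h^0(L-kF)=2$, and $h^0(K_X-L+kF)\geq h^0(K_X-L)\geq2$, so $L-kF\in\mathcal S$ regardless of $g(C)$. Riemann--Roch for $L-kF$ gives $h^0(L-kF)+h^0(K_X-L+kF)\geq k-a+\chi(\mathcal O_X)$, and combining with $h^0(L-kF)+h^0(K_X-L+kF)\leq 1+p_g-\beta(X)$ yields $\beta(X)\leq q+a-k=q+\tfrac12\alpha(X)+1$ directly, with no case split on the base genus.
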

\begin{proof}
$\beta(X)\geq 0$ is an easy consequence of Theorem \ref{theorem0.2}.
The following proof is similar to that of Proposition
\ref{proposition2.4}. Keep the notation as in the proof of
Proposition \ref{proposition2.4}. We assume $L$ computes $\alpha(X)$
and $|L|$ has no fixed part. Then we have $\alpha(X)=
K_XL-2h^0(L)+2$ and
\begin{equation}\label{beta}
h^0(L)+h^0(K_X-L)=1+p_g-\beta(L)\leq 1+p_g-\beta(X).
\end{equation}

Case A. $\dim W=1$. By (\ref{A1}) and (\ref{beta}), we obtain
\begin{equation}\label{A3}
\beta(X)\leq q+\frac{a}{2}K_XF-\frac{a^2}{2}F^2.
\end{equation}

If $F^2\geq 1$, we have $2a\leq a^2+1\leq a^2F^2+1$. This and
(\ref{1}) imply $aK_XF-a^2F^2\leq\alpha(X)+1$. From (\ref{A3}), it
follows that $\beta(X)\leq \alpha(X)/2+q+1/2$.

If $F^2=0$, then $|L|$ has no base points. When $h^0(L)=2$, then
$\alpha(X)+2=aK_XF$. It follows from (\ref{A3}) that $\beta(X)\leq
\alpha(X)/2+q+1$. When $h^0(L)\geq 3$, similarly as in the proof of
Proposition \ref{proposition2.4}, we have $K_XF=2$. Hence by
(\ref{A2}), we get
$$h^0(L-kF)+h^0(K_X-L+kF)\geq k-a+\chi(\mathcal{O}_X),$$ where
$k=h^0(L)-2=(K_XL-\alpha(X))/2-1$. On the other hand,
$h^0(L-kF)+h^0(K_X-L+kF)\leq 1+p_g-\beta(X)$. Combining them, we
obtain
\begin{eqnarray*}
\beta(X)\leq q+a-k &  = & q+a-\frac{1}{2}(K_XL-\alpha(X))+1 \\
                   &  = & q+\frac{1}{2}\alpha(X)+1+a-\frac{aK_XF}{2}\\
                   &  = & q+\frac{1}{2}\alpha(X)+1.
\end{eqnarray*}

Case B. $\dim W=2$. By (\ref{B}) and (\ref{beta}), we get
$\beta(X)\leq \alpha(X)/2+q+1$ immediately.
\end{proof}

Similarly as in the case of Corollary \ref{corollary2.5}, we have
the following corollary.
\begin{corollary}
If $\beta(X)=\alpha(X)/2+q+1$, $L$ computes $\alpha(X)$ and $|L|$
has no fixed part, then $|L|$ is base point free and one of the
following cases occurs.
\begin{enumerate}
\item $h^0(L)=2$, $h^1(L)=0$ and $L$ computes $\beta(X)$.
\item $h^0(L)\geq3$ and $|L|$ is composed of a pencil of genus $2$.
\item $h^1(L)=0$, $L$ computes $\beta(X)$ and $\varphi_L$ is
generically $2$ to $1$ onto a surface of minimal degree in
$\mathbf{P}^{h^0(L)-1}$.
\end{enumerate}
\end{corollary}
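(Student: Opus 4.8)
The plan is to imitate the proof of Corollary \ref{corollary2.5}: I would keep the notation of the proof of Proposition \ref{proposition2.6} (so $L$ computes $\alpha(X)$, $|L|$ has no fixed part, hence $M=L$, and $W=\varphi_L(X)$) and then force \emph{every} inequality in that proof to be an equality under the hypothesis $\beta(X)=\alpha(X)/2+q+1$. Each of the three subcases in which the bound of Proposition \ref{proposition2.6} can be attained with equality will correspond to one of the three cases of the corollary, and the remaining subcase will be excluded because it gives a strictly smaller value.

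First I would dispose of Case A with $F^2\geq 1$: there the proof of Proposition \ref{proposition2.6} only yields $\beta(X)\leq\alpha(X)/2+q+1/2$, which is strictly below the assumed value, so this subcase cannot occur. Thus in Case A we must have $F^2=0$, and since $L\equiv aF$ with $F^2=0$ and $|L|$ fixed-part-free, $|L|$ is base point free. If $h^0(L)=2$, then $a=1$ and $\alpha(X)+2=aK_XF$; since the passage from (\ref{A3}) to the final bound is then exact, equality forces equality in (\ref{A3}), which in turn forces both equality in (\ref{beta}), i.e.\ $\beta(L)=\beta(X)$ so that $L$ computes $\beta(X)$, and equality in the Riemann--Roch step (\ref{A1}), i.e.\ $h^1(L)=0$. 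This is case $1$. If $h^0(L)\geq 3$, the argument of Proposition \ref{proposition2.6} already produces $F^2=0$ and $K_XF=2$, whence $g(F)=\tfrac12(F^2+K_XF)+1=2$, so $|L|$ is composed of a base point free pencil whose general fibre has genus $2$; this is case $2$.

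For Case B ($\dim W=2$), equality forces equality throughout (\ref{B}) and (\ref{beta}). Equality in (\ref{beta}) again says $L$ computes $\beta(X)$, while equality in the two steps of (\ref{B}) gives, respectively, $h^1(L)=0$ and $L^2=2h^0(L)-4$. At this point I would invoke the equality analysis already carried out in Proposition \ref{proposition1.5} (and used in Theorem \ref{theorem0.1}): because $\dim W=2$ and $|L|$ has no fixed part, a general member of $|L|$ is reduced and irreducible, so $L$ is nef; and $(K_X-L)L\geq 0$ since $h^0(K_X-L)\geq 2$. Applying Lemma \ref{lemma1.3} to $M=L$ then excludes $\deg\varphi_L=1$ exactly as in Proposition \ref{proposition1.5}, so that $\deg\varphi_L=2$, $\deg W=h^0(L)-2$ is minimal, and $|L|$ is base point free. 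Combined with $h^1(L)=0$ and the fact that $L$ computes $\beta(X)$, this is case $3$.

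The hard part will be Case B, because the proof of Proposition \ref{proposition2.6} there is short and extracts only the numerical identities $L^2=2h^0(L)-4$ and $h^1(L)=0$; to reach the geometric conclusion (base point freeness and the generically $2$-to-$1$ map onto a surface of minimal degree) one must re-run the Castelnuovo/Hodge-index argument of Proposition \ref{proposition1.5} and verify carefully that the hypotheses of Lemma \ref{lemma1.3} are genuinely satisfied here (namely that $|L|$ is fixed-part-free, that the degree-one case is excluded so $\varphi_L$ is birational onto its image in the relevant reduction, and that $(K_X-L)L\geq 0$). The remaining bookkeeping—confirming that equality in (\ref{beta}) is precisely the statement ``$L$ computes $\beta(X)$'' in cases $1$ and $3$, and that the three subcases are exhaustive and mutually exclusive via the division by $\dim W$ and the value of $h^0(L)$—is routine.
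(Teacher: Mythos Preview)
Your proposal is correct and follows exactly the approach the paper intends: the paper gives no proof beyond ``Similarly as in the case of Corollary \ref{corollary2.5}'', meaning one traces through the proof of Proposition \ref{proposition2.6} and forces every inequality to be an equality, which is precisely what you do. One harmless slip: in the $h^0(L)=2$ subcase you assert $a=1$, which is neither justified nor needed---the identity $\alpha(X)+2=aK_XF$ holds for any $a$ once $F^2=0$ and $|L|$ has no fixed part, and that is all your argument uses.
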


\section{Surfaces with $\alpha=0$ or $\beta=0$}\label{sec:4}

It is natural to ask what will happen when these indices $\alpha$
and $\beta$ are small. The answers for $\alpha=0$ and $\beta=0$,
respectively, are given in the following theorems. We always assume
$L$ computes $\alpha(X)$ and $|L|$ has no fixed part.
\begin{theorem}\label{theorem3.1}
If $\alpha(X)=0$, then $|L|$ has no base point and one of the
following occurs.

$1$. There exists a projective surjective morphism
$f:X\rightarrow\mathbf{P}^1$, whose general fiber is an irreducible
smooth curve of genus $2$.

$2$. $X$ is the minimal resolution of a double covering of
$\mathbf{P}^2$, whose branch locus is a reduced curve of degree $10$
with only one infinitely near triple point as its essential
singularity. In this case, $K_X^2=7$, $p_g=5$, $q=0$ and $K_X\sim
2L-Z$, where $Z$ is an effective divisor with $LZ=0$ and
$K_XL=2L^2=4$.

$3$. $X$ is the smooth minimal model of a double covering of
$\Sigma_2$, whose branch locus is a reduced curve of
$|8\Delta_0+14\Gamma|$ with at worst negligible singularities. In
this case, $K_X^2=9$, $p_g=6$, $q=0$ and $K_X\sim 3D$, where $2D=L$.

$4$. $X$ is the minimal resolution of a double covering of
$\mathbf{P}^2$, whose branch locus is a reduced curve of degree $10$
with at worst negligible singularities. In this case, $K_X^2=8$,
$p_g=6$, $q=0$ and $K_X\sim 2L$.

\end{theorem}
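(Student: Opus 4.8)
The plan is to run the fixed divisor $L$ through Theorem \ref{theorem0.1}. Since $h^0(L)\ge 2$, the first alternative there ($h^0(L)=1$) cannot occur, and because $|L|$ has no fixed part its movable part is $|L|$ itself; thus Theorem \ref{theorem0.1} already yields that $|L|$ is base point free. If its second alternative holds we obtain a genus $2$ fibration $\varphi_L:X\to\mathbf{P}^1$, which is precisely Case $1$. The entire remaining problem is the third alternative: writing $n=h^0(L)$, the morphism $\varphi_L$ is generically $2$ to $1$ onto a surface $W$ of minimal degree in $\mathbf{P}^{n-1}$, and from the proof of Theorem \ref{theorem0.1} I record $K_XL=2n-2$ and $L^2=2n-4=2\deg W$. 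So the goal reduces to classifying minimal surfaces of general type carrying such a degree $2$ morphism.

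Next I would realize $X$ as a double cover. Let $\tilde W\to W$ be the minimal resolution (the only singular $W$ that can occur is a cone), let $\pi:X\to\tilde W$ be the induced degree $2$ morphism, and write $L=\pi^*H$ with $H$ the pullback of $\mathcal{O}_W(1)$. The branch data is a divisor $B\in|2D|$, and the canonical bundle formula gives $K_X=\pi^*(K_{\tilde W}+D)$, so that $K_X^2=2(K_{\tilde W}+D)^2$, $(K_{\tilde W}+D)H=n-1$, and, since $\tilde W$ is rational, $p_g=h^0(K_{\tilde W}+D)$ and $q=0$. The minimal surfaces of minimal degree are $\mathbf{P}^2$, the Veronese surface, and the rational normal scrolls (the Hirzebruch surfaces $\Sigma_e$ and the cones over rational normal curves). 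The crucial sorting device is the inequality $\alpha(\cdot)\ge 0$ of Theorem \ref{theorem0.1}, applied not only to $L$ but to the pullbacks of the rulings and of the smaller sub-systems of $|H|$. An integrality check on $K_XL$ eliminates the Veronese; applying $\alpha\ge 0$ to the pullback of each ruling forces strong bounds on the branch class, which rule out the higher scrolls and force the smooth quadric to carry a pencil of genus $2$ (hence to fall under Case $1$). What survives is exactly $W=\mathbf{P}^2$ with $n=3$, and $W$ a quadric cone resolved by $\Sigma_2$ with $n=4$.

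Finally I would carry out the double cover analysis for these two targets. For $W=\mathbf{P}^2$ one has $K_X\sim 2L$, which forces the branch curve $B$ to have degree $10$; performing the canonical resolution of the double cover in the style of Horikawa, the case of at worst negligible singularities yields a smooth minimal $X$ with $K_X^2=8$, $p_g=6$, $K_X\sim 2L$ (Case $4$), while a single infinitely near triple point drops the invariants to $K_X^2=7$, $p_g=5$ and introduces the correction $K_X\sim 2L-Z$ with $Z$ effective, $LZ=0$ (Case $2$). For the quadric cone the branch locus lies in $|8\Delta_0+14\Gamma|$ on $\Sigma_2$ and, again with at worst negligible singularities, one computes $K_X^2=9$, $p_g=6$, $q=0$ and $K_X\sim 3D$ with $2D=L$ (Case $3$). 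I expect the genuine difficulty to be precisely this last bookkeeping: executing the canonical resolution of the double covers while tracking how each admissible branch singularity affects $K_X^2$, $p_g$ and the nefness of $K_X$, and proving that the only singularities compatible with $X$ being minimal of general type are the negligible ones together with the single infinitely near triple point that separates Case $2$ from Case $4$.
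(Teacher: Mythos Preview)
Your outline takes a genuinely different route from the paper, and while the opening reduction via Theorem~\ref{theorem0.1} is the same, the remainder has a structural gap.

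The paper does \emph{not} classify the possible minimal degree targets $W$ and eliminate them one by one. Instead, once $\dim W=2$ it combines the Hodge index inequality $K_X^2L^2\le (K_XL)^2=(L^2+2)^2$ with the bound $K_X^2\ge K_XL+2$ coming from $\alpha(K_X-L)\ge 0$ (since $K_X-L\in\mathcal{S}$). Together these pin down $K_X^2\in\{L^2+4,\ L^2+5,\ L^2+6\}$, and in the last two cases further force $L^2\le 4$. The three values of $K_X^2-L^2$ are then treated separately: the case $K_X^2=L^2+4$ is shown to fall under Case~$1$ via the pencil $|K_X-L|$ (whatever $W$ happens to be), while the other two values lead to the finitely many double cover pictures in Cases~$2$--$4$.

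Your plan misses exactly the phenomenon handled by the paper's case $K_X^2=L^2+4$. You assert that applying $\alpha\ge 0$ to the pullback of a ruling ``rules out the higher scrolls'' and leaves only $\mathbf{P}^2$ and the quadric cone. But higher scrolls are \emph{not} excluded: for every even $L^2\ge 2$ and every minimal degree surface $W\subset\mathbf{P}^{h^0(L)-1}$ there are $X$ with $K_X^2=L^2+4$ mapping $2:1$ to $W$, and these land in Case~$1$ through the genus~$2$ pencil $|K_X-L|$, not through the ruling of $W$. The inequality $\alpha(\pi^*\Gamma)\ge 0$ for a ruling fiber $\Gamma$ only yields $K_X\cdot\pi^*\Gamma\ge 2$, which is far too weak to force $n\le 4$. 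The same issue recurs for $W=\mathbf{P}^2$: with $K_X^2=6$ this target still appears but falls under Case~$1$, not Cases~$2$ or~$4$; your sketch gives no mechanism for detecting this. In short, the numerical input you are missing is precisely the pair of inequalities above, which is what lets the paper separate ``genus~$2$ pencil via $K_X-L$'' from the three exotic double covers and, in the latter, pin down $p_g$ and hence the exact branch singularities (e.g.\ a single infinitely near triple point rather than any other configuration). A secondary technical point: the formula $K_X=\pi^*(K_{\tilde W}+D)$ presumes $\varphi_L$ is finite, whereas it may contract curves; the paper handles this by writing $2R=\varphi_L^*B-2Z$ with an effective correction $Z$ satisfying $LZ=0$.
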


\begin{proof}
Let $W$ be the image of $\varphi_L$. Since $L$ computes $\alpha(X)$,
we get $K_XL-2h^0(L)+2=\alpha(X)=0$.

When $\dim W=1$, by Theorem \ref{theorem0.1}, we have $|L|$ is base
point free and the general fiber of $\varphi_L: X\rightarrow
W\cong\mathbf{P}^1$ is an irreducible smooth curve of genus 2. Thus
$X$ is the surface of type 1 in the theorem. When $\dim W=2$, by
Theorem \ref{theorem0.1}, we know that $|L|$ is base point free,
$\varphi_L:X\rightarrow W$ is generically $2$ to $1$ and
\begin{equation}\label{*}
L^2=K_XL-2=2h^0(L)-4\geq 2.
\end{equation}
By Hodge's index theorem, we obtain
$$K_X^2L^2\leq (K_XL)^2=(L^2+2)^2=(L^2)^2+4L^2+4.$$ This implies
\begin{equation}\label{4}
K_X^2\leq L^2+\frac{4}{L^2}+4.
\end{equation}
Since $2\leq h^0(K_X-L)\leq K_X(K_X-L)/2+1$, we have
\begin{equation}\label{5}
K_X^2\geq K_XL+2=L^2+4.
\end{equation}
Combining (\ref{4}) and (\ref{5}), we obtain
$$L^2+4\leq K_X^2\leq L^2+\frac{4}{L^2}+4\leq L^2+6.$$
Thus we get three possible cases A: $K_X^2=L^2+4$, B: $K_X^2=L^2+5$
and C: $K_X^2=L^2+6$.

Case A. $K_X^2=L^2+4$. This implies that $K_X^2=K_XL+2$. Then
$$2\leq h^0(K_X-L)\leq\frac{1}{2}K_X(K_X-L)+1=\frac{1}{2}({K_X}^2-K_XL)+1=2.$$
Thus $h^0(K_X-L)=2$. Let $|K_X-L|=|M'|+V'$ be the decomposition into
its movable and fixed parts. Let $\phi:X\dashrightarrow
\mathbf{P}^1$ be the rational map defined by $|K_X-L|$. Then there
exists an irreducible reduced curve $F'$, such that ${F'}^2\geq 0$,
$M'\equiv bF'$ and $b\geq h^0(K_X-L)-1=1$. Since
$$2=(K_X-L)K_X=M'K_X+V'K_X\geq bF'K_X\geq F'K_X,$$
we can get $F'K_X=2$. Hence $b=1$, ${F'}^2=0$ or $2$. If ${F'}^2=2$,
then $(K_X-L)F'=M'F'+V'F'\geq {F'}^2=2$. Thus $LF'\leq K_XF'-2=0$.
By Hodge's index theorem, we get ${F'}^2\leq 0$. It is impossible.
It follows that ${F'}^2=0$, and $|M'|$ is base point free. Hence
$g(F')=({F'}^2+K_XF')/2+1=2$ and $M'\sim F'$. We know that the
general fiber of $\phi:X\rightarrow\mathbf{P}^1$ is an irreducible
smooth curve of genus 2. Therefore $X$ is the surface of type 1 in
the theorem.

Case B. $K_X^2=L^2+5$. Since $2\leq h^0(K_X-L)\leq
K_X(K_X-L)/2+1=5/2$, we get $h^0(K_X-L)=2$. By (\ref{4}), we have
$L^2+5\leq L^2+4/L^2+4$. This implies $2\leq L^2\leq 4$. Since
$L^2=2h^0(L)-4$ is an even number, there are two cases B-I: $L^2=2$
and B-II: $L^2=4$.

Case B-I. We have $K_X^2=7$, $K_XL=4$ and $h^0(L)=3$. By Theorem
\ref{theorem0.2}, we have $p_g(X)=h^0(K_X)\geq h^0(L)+h^0(K_X-L)=5$.
Using Noether's inequality, we know that $7=K_X^2\geq 2p_g(X)-4$,
i.e., $p_g(X)\leq 5$. Thus we get $p_g(X)=5$ and
$K_X^2=7<10=2p_g(X)$. Since $K_X^2\geq 2p_g(X)$, when $X$ is
irregular (See \cite{De}), we conclude that $q(X)=0$.

Since $h^0(L)=3$, we know that $\varphi_L:X\rightarrow
W=\mathbf{P}^2$ is generically $2$ to $1$. Let $X\rightarrow
X'\xrightarrow{f}\mathbf{P}^2$ be the Stein factorization of
$\varphi_L$, $\widetilde{X}$ the canonical resolution of the double
covering and $m_i$ the multiplicity of the corresponding
singularity. $R$ and $B$ denote, respectively, the ramification
divisor and the branch locus of $\varphi_L$. If $H$ denotes a line
on $\mathbf{P}^2$, then we have $K_X=\varphi_L^*(-3H)+R=-3L+R$. By
the theory of double covering (See \cite[\S2]{Hor}, \cite[III,
\S2]{Ho} or \cite[\S1.3]{Xiao}), there exists an effective divisor
$Z$ on $X$, such that $2R=\varphi_L^*B-2Z$ and $LZ=0$. Thus
$$BH=\frac{1}{2}\varphi_L^*B\varphi_L^*H=(R+Z)L=RL=(K_X+3L)L=K_XL+3L^2=10.$$
Hence $B\sim 10H$ and $K_X\sim 2L-Z$. Now we can compute the
invariants of $\widetilde{X}$. We have
\begin{eqnarray*}
\chi(\mathcal{O}_X)=\chi(\mathcal{O}_{\widetilde{X}}) & = & \frac{1}{4}B\left(K_{\mathbf{P}^2}+\frac{1}{2}B\right)+2\chi(\mathcal{O}_{\mathbf{P}^2})-\sum_i\frac{1}{2}\left[\frac{m_i}{2}\right]\left(\left[\frac{m_i}{2}\right]-1\right)\\
                                                      & = & 7-\frac{1}{2}\sum_i\left[\frac{m_i}{2}\right]\left(\left[\frac{m_i}{2}\right]-1\right),
\end{eqnarray*}
\begin{eqnarray*}
K_{\widetilde{X}}^2=2\left(K_{\mathbf{P}^2}+\frac{1}{2}B\right)^2-\sum_i2\left(\left[\frac{m_i}{2}\right]-1\right)^2=8-2\sum_i\left(\left[\frac{m_i}{2}\right]-1\right)^2.
\end{eqnarray*}
From the equality $q(\widetilde{X})=q(X)=0$, it follows that
$$p_g(X)=6-\frac{1}{2}\sum_i\left[\frac{m_i}{2}\right]\left(\left[\frac{m_i}{2}\right]-1\right).$$
Since $p_g(X)=5$, we have $[m_i/2]=2$ for only one index and
$K_{\widetilde{X}}^2=6$. It follows that $\widetilde{X}$ has a
$(-1)$-curve. Therefore $X$ is the surface of type 2 in the theorem.

Case B-II. We have $K_X^2=L^2+5=9$ and $K_XL=L^2+2=6$ by (\ref{*}).
Thus $L^2K_X^2=36=(K_XL)^2$. Using Hodge's index theorem, we have
$L\equiv (2/3)K_X$. It follows from (\ref{*}) that $h^0(L)=4$. By
Theorem \ref{theorem0.2}, we have $p_g(X)=h^0(K_X)\geq
h^0(L)+h^0(K_X-L)=6$. By Noether's inequality, we obtain
$9=K_X^2\geq 2p_g(X)-4$, i.e., $p_g(X)\leq 6$. Hence $p_g(X)=6$ and
$K_X^2=9<12=2p_g(X)$. It follows that $q(X)=0$.

Since $\deg W=L^2/2=2$, either
$W\cong\mathbf{P}^1\times\mathbf{P}^1$ or $W$ is a quadric cone.

Assume $W\cong\mathbf{P}^1\times\mathbf{P}^1$. Two rulings of $W$
allow us to write $L\sim D_1+D_2$ with divisors $D_i$ satisfying
$D_i^2=0$ $(i=1,2)$. Since $6=K_XL=K_XD_1+K_XD_2$, we may assume
that $K_XD_1$ is an even integer not greater than 3. Hence
$K_XD_1=2$. But, this is absurd, because $LD_1=(2/3)K_XD_1=4/3$.

Now we assume that $W$ is a quadric cone. In this case, by the same
argument as in the proof of \cite[Lemma 2, Case II b]{Hor}, we have
$L\sim 2D+G$, where $|D|$ is a pencil and $G$ is an effective
divisor with $LG=0$. From the equality $4=L^2=L(2D+G)$, it follows
that $LD=2$. Since $L\equiv(2/3)K_X$, we have $K_XD=3$. From $LD=2$,
we get $(2D+G)D=2D^2+DG=2$, hence $D^2=0$ or 1. But $3=K_XD\equiv
D^2\pmod 2$, hence $D^2=1$ and $DG=0$. The equality $0=LG=2DG+G^2$
implies that $G^2=0$. Then by Hodge's index theorem, we have $G=0$.
Thus $L\sim 2D$ and $K_X\equiv 3D$.

Since $h^0(D)\leq(1/2)K_XD+1=5/2$, we have $h^0(D)=2$. By $D^2=1$,
we know $|D|$ has one base point $P$. Let
$\sigma:\widehat{X}\rightarrow X$ be the blowing-up with center $P$
and put $E=\sigma^{-1}(P)$. Then the movable part $\widehat{D}$ of
$|\sigma^*D|$ defines a holomorphic map
$g:\widehat{X}\rightarrow\mathbf{P}^1$. Since $|L|$ has no base
point, there exists $\eta\in
H^0(\widehat{X},\mathcal{O}_{\widehat{X}}(\sigma^*L))$ which does
not vanish on $E$. Take $\xi\in
H^0(\widehat{X},\mathcal{O}_{\widehat{X}}(2E))$ such that
$(\xi)=2E$. Then $\xi/\eta$ is a meromorphic section of
$\mathcal{O}_{\widehat{X}}(-2\widehat{D})$. Then $g$ and $\xi/\eta$
define a rational map $h:\widehat{X}\rightarrow \Sigma_2$. Since
$\eta$ does not vanish on $E$, $h$ is defined everywhere so that
$h^*\Delta_0=2E$. We consider the linear system $|\Delta_0+2\Gamma|$
on $\Sigma_2$. This give rise to a morphism
$q:\Sigma_2\rightarrow\mathbf{P}^3$ whose image coincides with $W$
up to an automorphism of $\mathbf{P}^3$. Then by the construction,
we have the following commutative diagram.
\[\begin{CD}
\widehat{X} @>h>>      \Sigma_2 \\
@V\sigma VV            @VVqV \\
X           @>\varphi_L>> W
\end{CD}\]
Let $H$ be a plane on $\mathbf{P}^3$. $R$ and $B$ denote,
respectively, the ramification divisor and the branch locus of $h$.
Then there exists an effective divisor $Z$ on $\widehat{X}$, such
that $2R=h^*B-2Z$ and $Z$ is contracted by $h$. Since
$q^*H=\Delta_0+2\Gamma=-(1/2)K_{\Sigma_2}$, we have
$\sigma^*(2D)=\sigma^*L=\sigma^*\varphi_L^*H=h^*q^*H=h^*(\Delta_0+2\Gamma)$.
The equality $h^*\Delta_0=2E$ implies that $h^*\Gamma=\sigma^*D-E$.
Thus we obtain
$$K_{\widehat{X}}=h^*(-2\Delta_0-4\Gamma)+R=-4E-4(\sigma^*D-E)+R=-2\sigma^*L+R.$$
As in Case B-I, we have
$$B\Delta_0=Rh^*\Delta_0=(K_{\widehat{X}}+2\sigma^*L)(2E)
=2(\sigma^*K_X+E+2\sigma^*L)E=-2,$$
$$B\Gamma=Rh^*\Gamma=(\sigma^*K_X+E+2\sigma^*L)(\sigma^*D-E)=K_XD+2LD-E^2=8.$$
Hence $B\sim 8\Delta_0+14\Gamma$. This equality implies
$$K_{\widehat{X}}=h^*(4\Delta_0+7\Gamma)-Z-2\sigma^*L=3\sigma^*D+E-Z,$$
i.e., $K_X\sim 3D-\sigma_*Z$. Since $K_X\equiv 3D$, we get $K_X\sim
3D$ and $Z=0$. Let $\overline{X}$ be the canonical resolution of the
double covering $h$ and $m_i$ the multiplicity of the corresponding
singularity. By the standard theory of double covering, we obtain
\begin{eqnarray*}
\chi(\mathcal{O}_{\overline{X}})=\chi(\mathcal{O}_{\widehat{X}})& = & \frac{1}{4}B\left(-2\Delta_0-4\Gamma+\frac{1}{2}B\right)+2-\sum_i\frac{1}{2}\left[\frac{m_i}{2}\right]\left(\left[\frac{m_i}{2}\right]-1\right)\\
                                                                & = & 7-\frac{1}{2}\sum_i\left[\frac{m_i}{2}\right]\left(\left[\frac{m_i}{2}\right]-1\right),
\end{eqnarray*}
\begin{eqnarray*}
K_{\overline{X}}^2=2\left(K_{\Sigma_2}+\frac{1}{2}B\right)^2-\sum_i2\left(\left[\frac{m_i}{2}\right]-1\right)^2=8-2\sum_i\left(\left[\frac{m_i}{2}\right]-1\right)^2.
\end{eqnarray*}
The equality $q(\widehat{X})=q(\overline{X})=q(X)=0$ implies that
$$p_g(X)=6-\frac{1}{2}\sum_i\left[\frac{m_i}{2}\right]\left(\left[\frac{m_i}{2}\right]-1\right).$$

Since $p_g(X)=6$, we have $[m_i/2]([m_i/2]-1)=0$ for all indices.
Thus $B\in|8\Delta_0+14\Gamma|$ is a reduced curve with at worst
negligible singularities. Therefore, in this case, $X$ is the
surface of type 3 in the theorem.

Case C. $K_X^2=L^2+6$. From (\ref{4}), it follows that
$L^2+6=K_X^2\leq L^2+4/L^2+4$. This implies $L^2\leq 2$. Thus
$L^2=2$ and $K_X^2=8$. By (\ref{*}), we get $h^0(L)=3$ and $K_XL=4$.
Hence $K_X^2L^2=16=(K_XL)^2$. By Hodge's index theorem, we conclude
that $K_X\equiv 2L$.

Since $h^0(L)=3$, we know that $\varphi_L:X\rightarrow
W=\mathbf{P}^2$ is generically $2$ to $1$. Let $m_i$ be the
multiplicity of the corresponding singularity. $R$ and $B$ denote,
respectively, the ramification divisor and the branch locus of
$\varphi_L$. If $H$ denotes a line on $\mathbf{P}^2$, then we have
$K_X=\varphi_L^*(-3H)+R=-3L+R$. By the theory of double covering,
there exists an effective divisor $Z$ on $X$ such that
$2R=\varphi_L^*B-2Z$ and $LZ=0$. We get $BH=RL=(K_X+3L)L=10$, i.e.,
$B\sim 10H$. Thus $K_X\sim -3L+5L-Z\sim 2L-Z$. Since $K_X\equiv 2L$,
we obtain $K_X\sim 2L$ and $Z=0$. By \cite[Lemma 5]{Hor}, we know
that $B\in|10H|$ is a reduced curve with at worst negligible
singularities. Therefore $X$ is the surface of type 4 in the theorem
and $p_g(X)=6-(1/2)\sum[m_i/2]([m_i/2]-1)=6$.
\end{proof}

Now, we assume $L$ computes $\beta(X)$ and $|L|$ has no fixed part.

\begin{theorem}\label{theorem3.2}
If $\beta(X)=0$, then $|K_X|$ is composed of a rational pencil and
$|L|$ is a sum of some fibers of the pencil.
\end{theorem}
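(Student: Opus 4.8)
The plan is to obtain the statement as a direct consequence of Theorem \ref{theorem0.2}. By hypothesis $L$ computes $\beta(X)=0$ and $|L|$ has no fixed part, so $L\in\mathcal{S}$ and hence $h^0(L)\ge 2$ and $h^0(K_X-L)\ge 2$; in particular $L\nsim 0$, $K_X-L\nsim 0$, and $L$ is a special divisor. Unwinding the definition, $\beta(L)=0$ is literally the equality $h^1(L)=q+\frac{1}{2}L(K_X-L)$, that is, equality in \eqref{I}. Thus Theorem \ref{theorem0.2} applies to $L$, and everything reduces to deciding which of its alternatives holds.

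First I would use the two inequalities $h^0(L)\ge 2$ and $h^0(K_X-L)\ge 2$. The proof of Theorem \ref{theorem0.2} produces the first two alternatives (``$L$ contains a divisor of the movable part of $|K_X|$'' and ``$L$ is contained in the fixed part of $|K_X|$'') only in its opening case $h^0(L)=1$ or $h^0(K_X-L)=1$, and derives Cases 1 and 2 under the complementary assumption $h^0(L)\ge 2$, $h^0(K_X-L)\ge 2$. Since membership in $\mathcal{S}$ supplies exactly these two inequalities, we land in the complementary branch, so only Case 1 or Case 2 can occur. Next I would invoke the standing hypothesis that $X$ is minimal of general type to discard Case 2: that case forces $|K_X|$ to be composed of an irrational pencil of elliptic curves, i.e. $X$ to carry an elliptic fibration $f\colon X\to C$, which would bound the Kodaira dimension of $X$ by $1$ and contradict general type. (Equivalently, the proof of Theorem \ref{theorem0.2} already shows $g(C)=0$ when $X$ is of general type, incompatible with the $g(C)\ge 2$ of Case 2.) Hence only Case 1 survives: $|K_X|$ is composed of a rational pencil and the movable part of $|L|$ is a sum of some of its fibers. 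As $|L|$ is assumed free of fixed part, its movable part is $|L|$ itself, so $|L|$ is a sum of fibers, which is the assertion.

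The proof is thus essentially bookkeeping layered on Theorem \ref{theorem0.2}, and I do not expect a genuine analytic obstacle. The one point that requires care is the reduction in the second paragraph: one must be sure that the inequalities coming from $L\in\mathcal{S}$ place us in the branch of Theorem \ref{theorem0.2} that outputs Cases 1 and 2, and then that general type leaves only the rational-pencil case while the no-fixed-part hypothesis upgrades the conclusion from the movable part of $|L|$ to all of $|L|$.
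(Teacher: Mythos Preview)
Your proposal is correct and is precisely the approach the paper takes: the paper's entire proof reads ``It is just a special case of Theorem \ref{theorem0.2},'' and your argument simply spells out why, using $L\in\mathcal{S}$ to land in the branch $h^0(L)\ge 2$, $h^0(K_X-L)\ge 2$, the general-type hypothesis to exclude Case 2, and the no-fixed-part assumption to identify $|L|$ with its movable part.
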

\begin{proof}
It is just a special case of Theorem \ref{theorem0.2}.
\end{proof}
When $\alpha$ or $\beta$ increase, the surface become more and more
complicated and we can not hope to give a detailed description of
the surface.

\section{The number of moduli of a surface}\label{sec:5}
\begin{definition}
For a surface of general type $S$, we define $M(S)$, which is the
number of moduli of $S$, to be the dimension of its Kuranishi space
$B$, i.e., the maximum of the dimensions of the irreducible
components of $B$ (cf. \cite{Cat1}).
\end{definition}

Hence we have
$$10\chi(\mathcal{O}_S)-2K_S^2=h^1(T_S)-h^2(T_S)\leq
M(S)=\dim B\leq h^1(T_S).$$ By \cite{Bo}, we have
$h^0(T_S)=h^0(\Omega_S^1(-K_S))=0$. By Serre duality,
$h^2(T_S)=h^0(\Omega_S^1(K_S))$, and we have
\begin{equation}\label{8}
M(S)\leq h^1(T_S)=10\chi(\mathcal{O}_S)-2K_S^2+h^0(\Omega_S^1(K_S)).
\end{equation}
Hence one can give an upper bound for $M(S)$ by giving an upper
bound for $h^0(\Omega_S^1(K_S))$. The following theorem improves the
inequality given in \cite[Theorem B]{Cat2}.

\begin{theorem}\label{theorem4.2}Let $X$ be a smooth minimal complex projective surface
of general type. We have the inequality
$M(X)\leq10\chi(\mathcal{O}_X)+(5/2)K_X^2+4$. Furthermore, if
$q(X)>0$, then $M(X)\leq10\chi(\mathcal{O}_X)+(1/2)K_X^2+4$.
\end{theorem}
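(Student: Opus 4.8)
The plan is to control the single undetermined term in the inequality \eqref{8}, namely $h^0(\Omega_X^1(K_X))$. Indeed, since $M(X)\le h^1(T_X)=10\chi(\mathcal{O}_X)-2K_X^2+h^0(\Omega_X^1(K_X))$, it suffices to prove $h^0(\Omega_X^1(K_X))\le\tfrac{9}{2}K_X^2+4$ in general, and $h^0(\Omega_X^1(K_X))\le\tfrac{5}{2}K_X^2+4$ when $q>0$; substituting into \eqref{8} then yields the two asserted bounds. If $h^0(\Omega_X^1(K_X))=0$ there is nothing to do, so I fix a nonzero section $\omega\in H^0(\Omega_X^1(K_X))$.

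First I would convert $\omega$ into a Clifford-type splitting. Viewing $\omega$ as a map $\mathcal{O}_X\to\Omega_X^1(K_X)$ and saturating its image produces a sub-line-bundle $\mathcal{O}_X(D)\hookrightarrow\Omega_X^1(K_X)$ with torsion-free quotient, where $D\ge 0$ is the divisorial zero locus of $\omega$. Since $\det\Omega_X^1(K_X)=3K_X$ (as $\det\Omega_X^1=K_X$ and twisting a rank-$2$ bundle by $K_X$ adds $2K_X$), the quotient is $\mathcal{I}_Z(3K_X-D)$ for a zero-dimensional $Z$. Wedging against $\omega$ defines a linear map $H^0(\Omega_X^1(K_X))\to H^0(\wedge^2\Omega_X^1(2K_X))=H^0(3K_X)$ whose kernel is exactly the image of $H^0(\mathcal{O}_X(D))$ and whose image injects into $H^0(\mathcal{O}_X(3K_X-D))$; hence
$$h^0(\Omega_X^1(K_X))\le h^0(D)+h^0(3K_X-D).$$
As $K_X$ is nef and $D$, $3K_X-D$ are effective, we have $K_XD\ge 0$ and $K_X(3K_X-D)\ge 0$, so Proposition \ref{proposition1.2} applies to both summands. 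Writing $d=K_X^2$ and $x=K_XD\in[0,3d]$ and bounding each $h^0$ by the (larger) quadratic term of Proposition \ref{proposition1.2}, the sum is at most $\frac{x^2+(3d-x)^2}{2d}+4$, which is convex in $x$ and hence maximized at $x=0$ or $x=3d$, giving $\tfrac{9}{2}K_X^2+4$. (The mixed cases, where one summand is instead estimated by the linear term $K_X(\cdot)/2+1$, yield only smaller values.) This is the first inequality.

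For the improved bound when $q>0$ I would run the same construction but wedge against a genuine holomorphic $1$-form. Since $q=h^0(\Omega_X^1)>0$, choose $0\ne\eta_0\in H^0(\Omega_X^1)$ with divisorial zero locus $D_0\ge 0$, so $\mathcal{O}_X(D_0)\hookrightarrow\Omega_X^1$ is saturated. Wedging against $\eta_0$ now maps $H^0(\Omega_X^1(K_X))$ into $H^0(\Omega_X^2(K_X))=H^0(2K_X)$, with kernel $H^0(\mathcal{O}_X(K_X+D_0))$ and image injecting into $H^0(\mathcal{O}_X(2K_X-D_0))$, whence
$$h^0(\Omega_X^1(K_X))\le h^0(K_X+D_0)+h^0(2K_X-D_0).$$
The decisive gain is that $D_0$ is small: $\mathcal{O}_X(D_0)$ is a sub-line-bundle of $\Omega_X^1$, and by Miyaoka's generic semipositivity theorem (applicable since a minimal surface of general type is not uniruled), the restriction of $\Omega_X^1$ to a general $C\in|mK_X|$ is nef, so every sub-line-bundle of $\Omega_X^1$ has $K_X$-degree at most $K_X^2$; thus $y:=K_XD_0\le K_X^2$. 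With $y\in[0,K_X^2]$ both $K_X(K_X+D_0)$ and $K_X(2K_X-D_0)$ are nonnegative, Proposition \ref{proposition1.2} again applies, and the quadratic estimates sum to at most $\frac{(d+y)^2+(2d-y)^2}{2d}+4=\frac{5d^2+2y(y-d)}{2d}+4\le\tfrac{5}{2}K_X^2+4$ because $y\le d$. Substituting into \eqref{8} gives $M(X)\le 10\chi(\mathcal{O}_X)+\tfrac12 K_X^2+4$.

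The main obstacle is the control of the vanishing divisors, and in particular the semipositivity bound $K_XD_0\le K_X^2$ in the irregular case; the rest is the bookkeeping of the two wedge maps and the elementary optimization of the estimates of Proposition \ref{proposition1.2}. One should also dispose of the degenerate situations — for instance when $3K_X-D$ or $2K_X-D_0$ fails to be effective, in which case the corresponding $h^0$ vanishes and the inequality is only easier — but these do not affect the extremal computation.
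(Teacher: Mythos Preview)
Your approach is correct in outline but differs from the paper's in its organization. The paper works with a single sub-line-bundle $\mathcal{O}_X(L)\subset\Omega_X^1(K_X)$ of \emph{maximal} $K_X$-slope and uses the $K_X$-semistability of $\Omega_X^1(K_X)$ throughout: semistability gives the upper bound $K_XL\le\tfrac32K_X^2$, and when $q>0$ the inclusion $\mathcal{O}_X(K_X)\subset\Omega_X^1(K_X)$ forces $K_XL\ge K_X^2$ by maximality of the slope, placing $L$ automatically in the range $[K_X^2,\tfrac32K_X^2]$ where the sharper bound $\tfrac52K_X^2+4$ drops out of the same computation. You instead run two separate constructions---saturating a section of $\Omega_X^1(K_X)$ for the general bound, and saturating a holomorphic $1$-form (then twisting by $K_X$) for the irregular bound, with Miyaoka's generic semipositivity replacing semistability to control $K_XD_0$. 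Both routes reach the same optimization; the paper's is more economical, yours more hands-on.

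There is, however, a genuine gap in your general case. You write $x=K_XD\in[0,3d]$ on the grounds that $3K_X-D$ is effective, but nothing in your construction ensures this: the quotient $\Omega_X^1(K_X)/\mathcal{O}_X(D)\cong\mathcal{I}_Z(3K_X-D)$ need not have a global section. Your final-paragraph escape clause (``the corresponding $h^0$ vanishes and the inequality is only easier'') does not actually help: if $h^0(3K_X-D)=0$ you are left bounding $h^0(D)$ alone via Proposition~\ref{proposition1.2}, and without an a priori upper bound on $K_XD$ the quadratic term $(K_XD)^2/(2K_X^2)+2$ is uncontrolled. The missing ingredient is exactly semistability: since $\mathcal{O}_X(D)\subset\Omega_X^1(K_X)$ one has $K_XD\le\tfrac32K_X^2$, and then your optimization goes through. (A minor point: the quadratic term in Proposition~\ref{proposition1.2} is not always the larger of the two, contrary to your phrasing, though your parenthetical on the ``mixed cases'' does correctly dispose of the alternative.)
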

\begin{proof}
We can assume $h^0(\Omega_X^1(K_X))>0$. We know that
$\Omega_X^1(K_X)$ is $K_X$-semistable (cf. \cite[Corollary 1.2]{E},
\cite{Bo} or \cite{T}). Thus we can find an invertible sheaf
$\mathcal{O}_X(L)$ such that it is an maximal invertible subbundle
of $\Omega_X^1(K_X)$ of maximal slope. Then
$\Omega_X^1(K_X)/\mathcal{O}_X(L)$ is torsion free and
$(\Omega_X^1(K_X)/\mathcal{O}_X(L))^{\vee\vee}\cong\mathcal{O}_X(3K_X-L)$.
Hence we obtain
\begin{equation}\label{9}
h^0(\Omega_X^1(K_X))\leq
h^0(L)+h^0((\Omega_X^1(K_X)/\mathcal{O}_X(L)))\leq
h^0(L)+h^0(3K_X-L).
\end{equation}

Case 1. $K_XL<K_X^2$. The inequality implies $K_X(3K_X-L)>2K_X^2$.
By the assumption $h^0(\Omega_X^1(K_X))>0$, we have $K_XL\geq 0$.
Thus by Proposition \ref{proposition1.5}, we get $h^0(L)\leq
K_XL/2+2$ and $h^0(3K_X-L)\leq (K_X(3K_X-L))^2/2K_X^2+2$. It follow
from (\ref{9}) that
\begin{eqnarray*}
h^0(\Omega_X^1(K_X)) & \leq & \frac{K_XL}{2}+2+\frac{(K_X(3K_X-L))^2}{2K_X^2}+2\\
                     &   =  & \frac{(K_XL-(5/2)K_X^2)^2-(25/4)(K_X^2)^2}{2K_X^2}+\frac{9}{2}K_X^2+4\\
                     & \leq & \frac{(0-(5/2)K_X^2)^2-(25/4)(K_X^2)^2}{2K_X^2}+\frac{9}{2}K_X^2+4\\
                     &   =  & \frac{9}{2}K_X^2+4.
\end{eqnarray*}
Hence $M(X)\leq
10\chi(\mathcal{O}_X)-2K_X^2+h^0(\Omega_X^1(K_X))\leq
10\chi(\mathcal{O}_X)+(5/2)K_X^2+4$.

Case 2. $K_XL\geq K_X^2$. By Proposition \ref{proposition1.5}, we
get $h^0(L)\leq(K_XL)^2/2K_X^2+2$. Since $\Omega_X^1(K_X)$ is
$K_X$-semistable, we have $K_XL\leq(3/2)K_X^2$. Hence
$K_X(3K_X-L)\geq(3/2)K_X^2$. By Proposition \ref{proposition1.5}, we
obtain $h^0(3K_X-L)\leq(K_X(3K_X-L))^2/2K_X^2+2$. From (\ref{9}), it
follows that
\begin{eqnarray*}
h^0(\Omega_X^1(K_X)) & \leq & \frac{(K_XL)^2}{2K_X^2}+2+\frac{(K_X(3K_X-L))^2}{2K_X^2}+2\\
                     &   =  & \frac{(K_XL-(3/2)K_X^2)^2-(9/4)(K_X^2)^2}{K_X^2}+\frac{9}{2}K_X^2+4\\
                     & \leq & \frac{(K_X^2-(3/2)K_X^2)^2-(9/4)(K_X^2)^2}{K_X^2}+\frac{9}{2}K_X^2+4\\
                     &   =  & \frac{5}{2}K_X^2+4.
\end{eqnarray*}
Hence $M(X)\leq
10\chi(\mathcal{O}_X)-2K_X^2+h^0(\Omega_X^1(K_X))\leq
10\chi(\mathcal{O}_X)+(1/2)K_X^2+4$.

When $q(X)=h^0(\Omega_X^1)>0$, we know that
$\mathcal{O}_X(K_X)\subset\Omega_X^1(K_X)$. Thus $K_XL\geq K_X^2$.
Therefore, by Case 2, we have $M(X)\leq
10\chi(\mathcal{O}_X)+(1/2)K_X^2+4$.
\end{proof}

\bigskip
\address{ 
Department of Mathematics and Statistics\\
Huazhong Normal University \\
Wuhan 430079 \\
People's Republic of China}{hsun@mail.ccnu.edu.cn}

\address{ 
Department of Mathematics\\
East China Normal University \\
Shanghai 200241 \\
People's Republic of China}{suntju@sohu.com}

\begin{thebibliography}{99}
\bibitem{A}E. Arbarello, M. Cornalba, P. Griffths and J. Harris, Geometry of algebraic
curves, Grundlehren der Mathematischen Wissenschaften, Vol. I.,
Springer Verlag, 1985.

\bibitem{Be}A. Beauville, L'application canonique pour les surfaces de type g\'en\'eral, Invent.
Math. 55 (1979), 121--140.

\bibitem{Bo}F. Bogomolov, Holomorphic tensors and vector bundles on projective
varieties, Math. USSR Izv. 13 (1979), 499--555.

\bibitem{Bom}E. Bombieri, Canonical models of surfaces of general type, Inst. Hautes \'Etudes Sci. Publ. Math.
42 (1973), 171--219.

\bibitem{Cat1}F. Catanese, Moduli of surfaces of general type, In: Algebraic geometry: open
problems (Ravello, 1982), 90--112, Lecture Notes in Math. 997,
Springer, Berlin-New York, 1983.

\bibitem{Cat2}F. Catanese, On the moduli spaces of surfaces of general type, J. Differential
Geom. 19 (1984), 483--515.

\bibitem{De}O. Debarre, In\'egalit\'es num\'eriques pour les surfaces de type
g\'en\'eral, Bull. Soc. Math. France 110 (1982), 319--346.

\bibitem{E}I. Enoki, Stability and negativity for tangent bundles of minimal K\"ahler
spaces, Geometry and analysis on manifolds (Katata/Kyoto, 1987),
118--126, Lecture Notes in Math. 1339, Springer, Berlin, 1988.

\bibitem{Ha}R. Hartshorne, Algebraic Geometry. Graduate Texts in Mathematics, Springer
Verlag, 1977.

\bibitem{Hor}E. Horikawa, On deformations of quintic surfaces, Invent.
Math. 31 (1975), 43--85.

\bibitem{Ho}E. Horikawa, Algebraic surfaces of general type with small $c_1^2
$, I, Ann. Math. 104 (1976), 357--387; II, Invent. Math. 37 (1976),
121--155; III, Invent. Math. 47 (1978), 209--248; IV, Invent. Math.
150 (1979), 103--128.

\bibitem{Ko1}K. Konno, Even canonical surfaces with small $K^2$, I,
Nagoya Math. J. 129 (1993), 115--146.

\bibitem{K}S. Kleiman, The Picard scheme, Fundamental algebraic
geometry, 235--321, Math. Surveys Monogr. 123, Amer. Math. Soc.,
Providence, RI, 2005.

\bibitem{Ma}H. H. Martens, Varieties of special divisors on a curve, II, J. Reine Angew Math. 233 (1968),
89--100.
\bibitem{Re}I. Reider, Vector bundles of rank 2 and linear systems on algebraic surfaces, Ann. of
Math. 127 (1988), 309--316.

\bibitem{Tan}S.-L. Tan, Effective behavior of multiple linear
systems, Asian J. Math. 7 (2003), 1--18.

\bibitem{T}H. Tsuji, Stability of tangent bundle of minimal algebraic
varieties, Topology 27 (1988), 429--442.

\bibitem{X}G. Xiao, L'irr\'egularit\'e des surfaces de type
g\'en\'eral dont le syst\'eme canonique est compos\'e d'un pinceau,
Composito Math. 56 (1985), 251--257.

\bibitem{Xiao}G. Xiao, The fibrations of algebraic surfaces, (in Chinese), Shanghai Scientific \& Technical Publishers,
1992.


\end{thebibliography}
\end{document}